\documentclass{article}

\usepackage[utf8]{inputenc}
\usepackage[cyr]{aeguill}
\usepackage[square,sort,comma,numbers]{natbib}
\usepackage{url}
\usepackage{graphicx}
\usepackage{bm}
\usepackage[a4paper]{geometry}
\geometry{hscale=0.8,vscale=0.80,centering}
\usepackage{amsmath,amsfonts,amsthm,amssymb}
\usepackage{stmaryrd}
\usepackage{color,xcolor}
\usepackage{subcaption}
\usepackage{array}
\usepackage{multirow}
\usepackage{url}
\usepackage{hyperref}
\usepackage{centernot}
\usepackage{tikz}
\usepackage[export]{adjustbox}

\newtheorem{example}{Example}
\newtheorem{rmk}{Remark}
\newtheorem{prop}{Proposition}

\newtheorem{definition}{Definition}

\newcommand{\bbu}{\mathbf{u}}
\newcommand{\bbv}{\mathbf{v}}
\newcommand{\bbw}{\mathbf{w}}
\newcommand{\bby}{\mathbf{y}}
\newcommand{\bbz}{\mathbf{z}}

\newcommand{\bbx}{\mathbf{x}}
\newcommand{\bbF}{\mathbf{F}}
\newcommand{\bbG}{\mathbf{G}}
\newcommand{\bbf}{\mathbf{f}}
\newcommand{\bbg}{\mathbf{g}}
\newcommand{\bbh}{\mathbf{h}}
\newcommand{\bbM}{\mathbf{M}}

\newcommand{\R}{\mathbb R}

\DeclareSymbolFont{matha}{OML}{txmi}{m}{it}
\DeclareMathSymbol{\varv}{\mathord}{matha}{118}
\begin{document}
\title{A positive- and bound-preserving vectorial lattice Boltzmann method in two dimensions}
\author{Gauthier Wissocq\footnote{corresponding author}, Yongle Liu, R\'emi Abgrall\\
Institute of Mathematics, University of Z\"urich, Switzerland\\
gauthier.wissocq@math.uzh.ch}
\date{}
\maketitle


\begin{abstract}
We present a novel positive kinetic scheme built on the efficient collide-and-stream algorithm of the lattice Boltzmann method (LBM) to address hyperbolic conservation laws. We focus on the compressible Euler equations with strong discontinuities. Starting from the work of Jin and Xin~\cite{Jin} and then \cite{Aregba,Bouchut}, we show how the LBM discretization procedure can yield both first- and second-order schemes, referred to as vectorial LBM. Noticing that the first-order scheme is convex preserving under a specific CFL constraint, we develop a blending strategy that preserves both the conservation and simplicity of the algorithm. This approach employs convex limiters, carefully designed to ensure either positivity (of the density and the internal energy) preservation (PP) or well-defined local maximum principles (LMP), while minimizing numerical dissipation. On challenging test cases involving strong discontinuities and near-vacuum regions, we demonstrate the scheme accuracy, robustness, and ability to capture sharp discontinuities with minimal numerical oscillations.
\end{abstract}

\textbf{\textit{Keywords---}} lattice Boltzmann method, hyperbolic conservation laws, convex limiting, positive preservation, kinetic method

\section{Introduction}
We are interested in the numerical approximation of systems of hyperbolic conservation laws in two dimensions
\begin{equation}
	\partial_t \bbu (\mathbf{x},t) + \partial_x \bbf(\bbu(\mathbf{x},t)) + \partial_y \bbg(\bbu(\mathbf{x},t)) = \mathbf{0}, \qquad \bbu(\mathbf{x},0) = \bbu_0(\mathbf{x}), \label{eq:hyperbolic_PDE}
\end{equation}
where $\bbu: \Omega \times \mathbb{R}^+ \mapsto \mathbb{R}^p$ is the vector of $p$ conserved variables, $\Omega \subset \mathbb{R}^2$ is a bounded smooth spatial domain and the fluxes $\bbf, \bbg:\mathbb{R}^p \rightarrow \mathbb{R}^p$ are Lipschitz continuous. 
Our main target is  the Euler equations for compressible flows, where
$\bbu=(\rho, \rho \bbv, E)^T \text{ and }(\bbf,\bbg)=(\rho \bbv, \rho \bbv\otimes\bbv+p\mathbf{I}_2, (E+p)\bbv)^T,$ where $\mathbf{I}_2$ is the $(2\times 2)$ identity matrix. The pressure $p$ is related to the conserved variable by an equation of state, and here we choose the ideal gas one $p=(\gamma-1)\big (E-\tfrac{1}{2}\rho\bbv^2\big)$, where $\gamma>1$ is a constant heat capacity ratio. To be admissible, the solution must belong to the convex set 
\begin{equation}
	\mathcal{G}=\{\bbu\in \R^4, \text{ such that }\rho>0 \text{ and } E-\tfrac{1}{2}\rho\bbv^2>0\}.
	\label{eq:convex_G}
\end{equation}
This system is equipped with an entropy $\eta$, a convex function of the conserved variables defined on $\mathcal{G}$. Any solution should satisfy the following inequality in the distributional sense
$\partial_t\eta+\text{div }\bbh \leq 0$ where the entropy flux is $\bbh=\eta \bbv$. For a perfect gas, the entropy is $\eta=-\rho\big (\log p-\gamma\log \rho\big ).$

Alternatively, weak solutions of the hyperbolic system \eqref{eq:hyperbolic_PDE} can be sought in the limit of a vanishing relaxation parameter of so-called advection-relaxation models. This is notably the principle of Jin-Xin model~\cite{Jin}, which was extended to the general framework of BGK kinetic models in the scalar case in~\cite{Natalini}, and for systems of PDEs in~\cite{Aregba}. Compared to a direct numerical resolution of the hyperbolic system \eqref{eq:hyperbolic_PDE}, the advantage of dealing with a kinetic advection-relaxation model is twofold: (1) the non-local terms are linear, avoiding the complex resolution of local Riemann problems, (2) under some conditions that are detailed in \cite{Bouchut} and that often reduce to Whitham's subcharacteristic condition~\cite{Whitham1974}, a kinetic model of the BGK type becomes compatible with entropy inequalities. 

On the other hand, a kinetic representation of a fluid through the Boltzmann equation has made it possible to build a simple and efficient numerical method for (weakly compressible) fluid mechanics problems, namely the lattice Boltzmann method (LBM)~\cite{McNamara1988, Kruger2017}. Its numerical scheme, also called \textit{collide and stream algorithm}, can be obtained through an appropriate time and space discretization of a discrete-velocity counterpart of the Boltzmann equation~\cite{He1998}. It is possible to apply the exactly same discretization procedure to any kinetic model in the framework of~\cite{Natalini, Aregba}, leading to efficient numerical methods consistent with hyperbolic systems such as compressible Euler: these  methods are referred to as vectorial LBM (VLBM). However, despite their promises and except few references, \textit{e.g.}~\cite{Graille2014, Dubois2014, Coulette2020, Caetano2023, Baty2023}, they have not gained significant attention from the LBM community for simulations of compressible flows. This is probably due to two reasons: (1) contrary to the standard LBM, these methods do not account for the viscous fluxes necessary to model finite Reynolds number flows, (2) as shown in~\cite{Graille2014}, the VLBM is subject to important oscillations close to discontinuities, which makes it not suitable for strongly compressible flows.
 
If the first issue has been dealt with in \cite{GauthierJCP1,GauthierJCP2}, the present paper addresses the second point. A particular problem caused by the numerical oscillations is the difficulty to preserve the solution in the convex domain $\mathcal{G}$. This is especially true for flows with  very strong discontinuities and near-vacuum regions. Convex preservation, and more precisely positive preservation (PP) for the Euler system using high-order schemes is a fundamental issue that raised the attention of many researchers, see e.g.~\cite{Perthame1996,Zhang2010,Guermond2018,Guermond2019,Vilar2019,Kuzmin2020,Hajduk2021,Wu2023,Vilar2024,Pampa2024} and the references within. A possible strategy is to blend a high-order scheme with  a PP first-order one  by convex limiting. Choosing well the parameters, the blended scheme can be made PP by construction. This strategy has been successfully applied in the context of the Euler equations to ensure  positivity of density, local/global maximum principles and pressure positivity~\cite{Vilar2019, Vilar2024, Duan2024-2D}. In the present work, we want to extend this strategy to the VLBM in order to make it PP and mostly oscillation free.

The format of this paper is as follows. First, we recall a few important facts about the VLBM and its formalism. In particular, we describe the collide and stream algorithm. Then, we show how to blend a first-order and a second-order schemes, under this collide and stream method, to get a conservative scheme that guaranties the positivity of the density and the internal energy under a known (reasonable) CFL condition. This is then illustrated by several examples, some quite demanding.
\section{Vectorial lattice Boltzmann schemes}

In the present work, we approximate the hyperbolic conservation laws \eqref{eq:hyperbolic_PDE} using  kinetic systems of the BGK type. The vector of conserved variables is defined as the zeroth-order moment of some distribution functions obeying advection-relaxation equations. Without loss of generality, this writes
\begin{equation}
	\partial_t \bbu_k^\varepsilon(\mathbf{x},t) + e_{x,k} \partial_x \bbu_k^\varepsilon(\mathbf{x}, t) + e_{y,k} \partial_y \bbu_k^\varepsilon(\mathbf{x}, t)  = \frac{1}{\varepsilon} \left( \bbM_k(\bbu^\varepsilon(\mathbf{x}, t))- \bbu_k^\varepsilon(\mathbf{x}, t) \right), \label{eq:kinetic}
\end{equation}
where $k=1,...,Q$, $\bbu_k^\varepsilon:\Omega \times \mathbb{R}^+ \rightarrow \mathbb{R}^p$ are the distribution functions, $e_{x,k}, e_{y,k} \in \mathbb{R}$ are kinetic velocities, $\varepsilon >0$ is a relaxation parameter, $\bbM_k:\mathbb{R}^p \rightarrow \mathbb{R}^p$ are Maxwellian functions, and $\bbu^\varepsilon (\mathbf{x}, t) := \sum_{k=1}^Q \bbu_k^\varepsilon (\mathbf{x}, t)$ is the zeroth moment. It can be shown \cite{Jin,Aregba,Natalini} that, when the following conditions are satisfied by the Maxwellian functions,
\begin{equation}
	\forall \bbu \in \mathbb{R}^p, \quad \sum_{k=1}^Q \bbM_k(\bbu) = \bbu, \quad \sum_{k=1}^Q e_{x,k} \bbM_k(\bbu) = \bbf(\bbu), \quad \sum_{k=1}^Q e_{y,k} \bbM_k(\bbu) = \bbg(\bbu), \label{eq:conditions_maxwellian} 
\end{equation}
then the solution $\bbu^\varepsilon$ of \eqref{eq:kinetic} formally tends to the solution $\bbu$ of the macroscopic system \eqref{eq:hyperbolic_PDE} when $\varepsilon \rightarrow 0$.

\begin{definition}[Bouchut criterion, from~\cite{Bouchut}]
	A set of Maxwellian functions $\bbM_k(\bbu)$ is said to be monotone non-decreasing if for all $k \in \{1, ..., Q\}$, $\bbM_k'(\bbu)$ is diagonalizable with nonnegative eigenvalues. 
	\end{definition}
	
This property \cite{Bouchut} guaranties that the system \eqref{eq:kinetic} admits an H-theorem if the hyperbolic system is equipped with a convex entropy. In the rest of this paper, the focus is put on a five-wave kinetic model (also referred to as D2Q5 using the notation of Qian \textit{et al.}~\cite{Qian1992}) whose details are provided in the example below. 


\begin{example}[Five-wave (D2Q5) model] \label{ex:D2Q5}
	We take $Q=5$ and consider the following kinetic velocities: $e_{x,k}=\{a, -a, 0, 0, 0\}$, $e_{y,k}=\{0, 0, a, -a, 0\}$ where $a>0$ is a kinetic speed. We choose Maxwellian functions as
\begin{equation*}
\begin{split}
 \bbM_1(\bbu) &= \left(\frac{1-\alpha}{4} \right) \bbu + \frac{\bbf(\bbu)}{2a}, \qquad\bbM_2(\bbu) = \left(\frac{1-\alpha}{4} \right) \bbu - \frac{\bbf(\bbu)}{2a}, \\
\bbM_3(\bbu) &= \left(\frac{1-\alpha}{4} \right) \bbu + \frac{\bbg(\bbu)}{2a}, \qquad\bbM_4(\bbu) = \left(\frac{1-\alpha}{4} \right) \bbu - \frac{\bbg(\bbu)}{2a}, \\
 \bbM_5(\bbu) &= \alpha \bbu,
\end{split}
\end{equation*}
where $\alpha \in [0, 1[$. These Maxwellian functions satisfy the conditions \eqref{eq:conditions_maxwellian}.
 \end{example}
 
 \begin{prop}
 	The Maxwellian functions of Example \ref{ex:D2Q5} are monotone nondecreasing (Bouchut criterion) if
 \begin{equation}
 	a\geq\frac{2}{1-\alpha} \max(\rho(\bbf'(\bbu)), \rho(\bbg'(\bbu))). \label{eq:Bouchut_D2Q5}
 \end{equation} 
 \end{prop}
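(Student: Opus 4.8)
The plan is to verify the Bouchut criterion directly: for each $k$, I would compute the Jacobian $\bbM_k'(\bbu)$, show it is diagonalizable with real eigenvalues, and then read off the sign condition on those eigenvalues. Differentiating the expressions in Example~\ref{ex:D2Q5} gives
\begin{equation*}
\bbM_{1,2}'(\bbu) = \frac{1-\alpha}{4}\mathbf{I} \pm \frac{1}{2a}\bbf'(\bbu), \qquad \bbM_{3,4}'(\bbu) = \frac{1-\alpha}{4}\mathbf{I} \pm \frac{1}{2a}\bbg'(\bbu), \qquad \bbM_5'(\bbu) = \alpha \mathbf{I},
\end{equation*}
where $\mathbf{I}$ is the $p\times p$ identity and the $\pm$ corresponds to the indices $1,3$ (sign $+$) and $2,4$ (sign $-$). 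The last Maxwellian is immediate: $\bbM_5'$ is a scalar multiple of the identity, hence diagonalizable, and its only eigenvalue $\alpha$ is nonnegative since $\alpha \in [0,1[$.

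For the remaining four I would invoke hyperbolicity of the target system \eqref{eq:hyperbolic_PDE}. Because the system is hyperbolic, both flux Jacobians $\bbf'(\bbu)$ and $\bbg'(\bbu)$ are diagonalizable with real eigenvalues; writing $\bbf'(\bbu) = \mathbf{R}\,\mathrm{diag}(\lambda_i)\,\mathbf{R}^{-1}$, the key observation is that adding a multiple of the identity and rescaling leaves the eigenvectors untouched, so in the same basis
\begin{equation*}
\bbM_{1,2}'(\bbu) = \mathbf{R}\,\mathrm{diag}\!\Big(\frac{1-\alpha}{4} \pm \frac{\lambda_i}{2a}\Big)\,\mathbf{R}^{-1}.
\end{equation*}
Thus each $\bbM_k'$ is automatically diagonalizable with real eigenvalues, and the whole question reduces to the sign of these affinely transformed eigenvalues.

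It then remains to enforce nonnegativity. For $\bbM_1'$ and $\bbM_2'$ together I need $\tfrac{1-\alpha}{4} \pm \tfrac{\lambda_i}{2a} \geq 0$ for every eigenvalue $\lambda_i$ of $\bbf'(\bbu)$ and both signs; since $\alpha\in[0,1[$ gives $1-\alpha>0$, the two one-sided constraints combine into $|\lambda_i| \leq \tfrac{a(1-\alpha)}{2}$ for all $i$, that is $\rho(\bbf'(\bbu)) \leq \tfrac{a(1-\alpha)}{2}$. Repeating the argument for $\bbg'(\bbu)$ and combining the two bounds yields $a \geq \tfrac{2}{1-\alpha}\max(\rho(\bbf'(\bbu)), \rho(\bbg'(\bbu)))$, which is exactly \eqref{eq:Bouchut_D2Q5}.

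I do not anticipate a serious obstacle here; the argument is essentially a one-line eigenvalue computation resting on the structural fact that shifting and scaling preserve the eigenbasis. The only point deserving care is the appearance of the spectral radius rather than a one-sided bound: the $\pm$ sign forces simultaneous control of both the most positive and the most negative eigenvalue, so it is $\max_i|\lambda_i|=\rho(\bbf'(\bbu))$ that governs the estimate. I would also emphasize that the diagonalizing basis $\mathbf{R}$ is in general $\bbu$-dependent, so the condition is pointwise in $\bbu$; in practice one chooses $a$ large enough to satisfy \eqref{eq:Bouchut_D2Q5} over the relevant range of admissible states, which recovers the subcharacteristic flavor of Whitham's condition mentioned in the introduction.
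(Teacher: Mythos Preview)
Your argument is correct and is exactly the natural one: compute the Jacobians, use hyperbolicity to diagonalize $\bbf'(\bbu)$ and $\bbg'(\bbu)$, note that a shift by a multiple of the identity preserves the eigenbasis, and read off the sign condition on the shifted eigenvalues. The paper does not actually supply a proof of this proposition---it is stated without proof---so there is nothing to compare against; your write-up would serve well as the missing justification.
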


 \begin{rmk}
 The four-wave model previously introduced in~\cite{Aregba} 
 is a particular case of our D2Q5 model for $\alpha=0$. In the present case, we have added a free parameter $\alpha$. Its role will become clearer in Section~\ref{sec:positive and BP}.
 \end{rmk}
 
In many references, \textit{e.g.} in~\cite{Graille2014, Dubois2014, Aregba-Driollet2024}, a (vectorial) lattice Boltzmann scheme can be considered by reinterpreting the kinetic system~\eqref{eq:kinetic} as a collision and a streaming phase on a lattice. The role played by the relaxation parameter in the collision is then investigated \textit{a posteriori}. In the present work, we propose to construct such a scheme following the \textit{a priori} construction of the LBM starting from a discrete-velocity Boltzmann equation by~\cite{He1998}, and we show how this can lead to both first- and second-order schemes. The two-dimensional domain $\Omega$ is discretized on a Cartesian mesh $\mathcal{C} = \{(x_i, y_j)\}$, where $x_{i+1}-x_i = y_{j+1}-y_j = \Delta x$. We also partition the time domain in intermediate times $\{t^n\}$ where $\Delta t^n = t^{n+1}-t^n$ is the $n$th time step. We start by integrating \eqref{eq:kinetic} along the characteristic lines $(x_i+e_{x,k}s, y_j + e_{y,k} s, t^n+s)$ for $s \in [0, \Delta t^n]$. This yields
 \begin{equation}
 \begin{split}
 	 \bbu_k^\varepsilon&(x_i+e_{x,k} \Delta t^n, y_j+e_{y,k} \Delta t^n, t^{n+1}) - \bbu_k^\varepsilon(x_i, y_j, t^n) =\\& \int_0^{\Delta t^n} \frac{1}{\varepsilon}\bigg[ \bbM_k(\bbu^\varepsilon(x_i+e_{x,k}s, y_i+e_{y,k}s, t+s)) 
	 - \bbu_k^\varepsilon (x_i+e_{x,k}s, y_i+e_{y,k}s, t+s) \bigg] \mathrm{d} s. \label{eq:integration_charac}
	\end{split}
 \end{equation}
 We first note that this leads to a numerical scheme in a finite-difference spirit provided that the points $(x_i + e_{x,k} \Delta t^n, y_j + e_{y,k} \Delta t^n) \in \mathcal{C}$ for all waves $1 \leq k \leq Q$. This implies a CFL=1 constraint related to the kinetic velocities. It is called  \textit{exact streaming} in the LBM community. This  yields the low dissipation of the method~\cite{Marie2009}. In the  case of Example~\ref{ex:D2Q5}, this condition reads $a^n {\Delta t^n}/{\Delta x} = 1$, where $a^n$ is the kinetic speed of the $n$th time step. 
 At first glance, this unitary CFL constraint may seem rude in the construction of a numerical method, as it does not leave us flexibility in the definition of the time step. In fact, we will see below that depending on the considered kinetic system and the choice of the kinetic speed, one still have freedom in the choice of the CFL number based on the largest eigenvalue of the hyperbolic system \eqref{eq:hyperbolic_PDE}.
 
 Until now, \eqref{eq:integration_charac} is exact. The next step is to approximate the  relaxation term. Since this is a stiff term, as $\varepsilon$ is likely to tend to zero, an implicit time integration has to be considered. In the following subsections, a first-order and a second-order implicit integrations of the relaxation term are proposed. Note that following appropriate variable changes inspired by what was initially proposed in~\cite{He1998}, it is possible to make both schemes fully explicit in time, as explained below.
 
 \subsection{First-order scheme}
 
 We approximate the right-hand-side term of \eqref{eq:integration_charac} using a first-order implicit Euler integration in time. Defining $\bbu_{k,i,j}^{\varepsilon,n} := \bbu_k^\varepsilon(x_i, y_j, t^n)$ and $\bbu_{i,j}^{\varepsilon,n} = \sum_k \bbu_{k,i,j}^{\varepsilon,n}$, this leads to
 \begin{equation}
 	\bbu_{k,i+\alpha_k,j+\beta_k}^{\varepsilon,n+1} - \bbu_{k,i,j}^{\varepsilon,n} = \frac{\Delta t^n}{\varepsilon} \left[ \bbM_k \left( \bbu_{i+\alpha_k,j+\beta_k}^{\varepsilon,n+1} \right) - \bbu_{k,i+\alpha_k, j+\beta_k}^{\varepsilon,n+1} \right], \label{eq:order1_implicit}
 \end{equation}
 where $\alpha_k = e_{x,k} \Delta t^n/\Delta x \in \mathbb{Z}$ and $\beta_k=e_{y,k} \Delta t^n/\Delta x \in \mathbb{Z}$. The scheme is implicit but can be made explicit by considering the variable change
 \begin{equation}
 	\overline{\bbu}_{k,i,j}^{\varepsilon,n} \stackrel{def}{=} \bbu_{k,i,j}^{\varepsilon, n} - \frac{\Delta t^n}{\varepsilon} \left[ \bbM_k(\bbu_{i,j}^{\varepsilon,n}) - \bbu_{k,i,j}^{\varepsilon,n} \right].\label{eq:variable_change_O1}
 \end{equation}
 With this new variable, Eq.~\eqref{eq:order1_implicit} reads
 \begin{equation}
 \begin{split}
 	\overline{\bbu}_{k,i+\alpha_k,j+\beta_k}^{\varepsilon, n+1} & = \overline{\bbu}_{k,i,j}^{\varepsilon, n} + \frac{\Delta t^n}{\varepsilon + \Delta t^n}  \left[ \bbM_k(\bbu_{i,j}^{\varepsilon,n}) - \overline{\bbu}_{k,i,j}^{\varepsilon, n} \right]. \label{eq:order1_explicit}
	\end{split}
 \end{equation}
A fundamental property of \eqref{eq:variable_change_O1} is that, since $\sum_k \left( \bbM_k(\bbu_{i,j}^{\varepsilon,n}) - \bbu_{k,i,j}^{\varepsilon,n} \right)=0$, then $ \sum_k \overline{\bbu}_{k,i,j}^{\varepsilon,n} = \sum_k {\bbu}_{k,i,j}^{\varepsilon, n} \stackrel{def}{=} \bbu_{i,j}^{\varepsilon, n}$. Hence, $\bbu^{\varepsilon}$ can be explicitly computed as the zeroth moment of $\overline{\bbu}_k^\varepsilon$: the scheme \eqref{eq:order1_explicit} is fully explicit in time. We finish  by letting $\varepsilon \rightarrow 0$ in order to make it consistent with the target hyperbolic system. Simply defining $\bbu_{k,i,j}^n := \lim_{\varepsilon \rightarrow 0} \overline{\bbu}_{k,i,j}^{\varepsilon, n}$ and $\bbu_{i,j}^n := \lim_{\varepsilon \rightarrow 0} \bbu_{i,j}^{\varepsilon, n}$, this yields
\begin{equation}
	\bbu_{k, i, j}^{n+1} = \bbM_k(\bbu_{i-\alpha_k,j-\beta_k}^n). \label{eq:1st-order_kinetic}
\end{equation}

\begin{example}[First-order scheme with the D2Q5 model]
\label{ex:D2Q5_O1}
Following the choices of Example~\ref{ex:D2Q5}, the first-order scheme reads
\begin{equation*}
\begin{split}
	\bbu_{1,i,j}^{n+1} &= \bbM_1(\bbu_{i-1, j}^n), \quad \bbu_{2, i, j}^{n+1} = \bbM_2(\bbu_{i+1,j}^n), \\
	\bbu_{3,i,j}^{n+1} &= \bbM_3(\bbu_{i,j-1}^n), \quad \bbu_{4,i,j}^{n+1} = \bbM_4(\bbu_{i,j+1}^n), \quad \bbu_{5,i,j}^{n+1} = \bbM_5(\bbu_{i,j}^n),
	\end{split}
\end{equation*}
and the conserved variables are finally updated with $\bbu_{i,j}^{n+1}=\sum_k \bbu_{k,i,j}^{n+1}$.
\end{example}
We finish this subsection by some important properties satisfied by this first-order kinetic scheme.

\begin{prop}
	In the case of scalar conservation laws ($p=1$), the first-order scheme is monotone if the Maxwelian functions are nondecreasing functions of their argument (Bouchut criterion~\cite{Bouchut}).
\end{prop}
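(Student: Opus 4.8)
The plan is to write the scalar one-step update explicitly from \eqref{eq:1st-order_kinetic} and verify that it is a nondecreasing function of each value in its finite stencil, which is exactly the definition of a monotone scheme.

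First I would specialize to $p=1$: the conserved quantity is a scalar $u_{i,j}^n\in\R$ and each Maxwellian is a scalar map $M_k:\R\to\R$. Summing \eqref{eq:1st-order_kinetic} over $k$ and using $u_{i,j}^{n+1}=\sum_k u_{k,i,j}^{n+1}$ gives the explicit update
\begin{equation*}
u_{i,j}^{n+1}=H\big(\{u_{i',j'}^n\}\big):=\sum_{k=1}^{Q} M_k\big(u_{i-\alpha_k,\,j-\beta_k}^n\big),
\end{equation*}
where the $k$-th summand depends on the single grid value sitting at the shifted node $(i-\alpha_k,\,j-\beta_k)$. This is an explicit, conservative update on a finite stencil, so proving monotonicity reduces to showing that $H$ is nondecreasing in every one of its arguments.

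Next I would translate the Bouchut criterion into the scalar setting. For $p=1$ each Jacobian $M_k'(u)$ is a $1\times1$ matrix, i.e. a real number, so ``diagonalizable with nonnegative eigenvalues'' collapses to $M_k'(u)\ge0$ for all $u$; equivalently, each $M_k$ is nondecreasing, which is precisely the hypothesis of the proposition. The conclusion is then immediate. Fixing an input node $(i',j')$, its only contribution to $u_{i,j}^{n+1}$ comes from those $k$ with $(i-\alpha_k,\,j-\beta_k)=(i',j')$; since a sum of nondecreasing functions is nondecreasing, $H$ is nondecreasing in $u_{i',j'}^n$, i.e.
\begin{equation*}
\frac{\partial u_{i,j}^{n+1}}{\partial u_{i',j'}^n}=\sum_{k:\,(i-\alpha_k,\,j-\beta_k)=(i',j')} M_k'\big(u_{i',j'}^n\big)\ge0 .
\end{equation*}
As $(i',j')$ is arbitrary, the scheme is monotone.

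There is no genuine analytic difficulty here; the only point that needs care is the stencil bookkeeping — in general several kinetic velocities could map to the same physical node (although for the D2Q5 stencil of Example~\ref{ex:D2Q5_O1} the five shifts are distinct, so each input feeds exactly one $M_k$), and one must then sum all such contributions rather than examine each $k$ in isolation. Because every summand $M_k'\ge0$, this pooling preserves the sign, and no further CFL restriction beyond the exact-streaming condition already used to obtain \eqref{eq:1st-order_kinetic} is required — the monotonicity threshold on the kinetic speed being exactly the Bouchut bound \eqref{eq:Bouchut_D2Q5}.
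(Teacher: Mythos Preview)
Your proposal is correct and follows essentially the same approach as the paper: both write the scalar update as $u_{i,j}^{n+1}=\sum_k M_k(u_{i-\alpha_k,j-\beta_k}^n)$ and conclude monotonicity directly from the nondecreasingness of each $M_k$. Your extra care about pooling contributions when several velocities hit the same node is a nice clarification but not needed for the argument to go through.
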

\begin{proof}
	In the scalar case, we note $\bbu_{i,j}^n = [\rho_{i,j}^n]$ and $\bbM_k(\bbu_{i,j}^n) = [M_k(\rho_{i,j}^n)]$. The first-order scheme reads $\rho_{i,j}^{n+1} = \sum_k M_k(\rho_{i-\alpha_k, j-\beta_k}^n)$ and since each $M_k$ is a nondecreasing function, then the update function of the scheme is nondecreasing in each of its argument. Following~\cite{Harten1976}, the scheme is monotone.
\end{proof}

\begin{prop}
	\label{prop:convex_preservation_O1}
	Consider the first-order scheme of the D2Q5 model (Example~\ref{ex:D2Q5_O1}), and assume that the solution at time $t^n$ is in the convex $\mathcal{G}$. Then, the updated solution is in $\mathcal{G}$ under the constraint  $1/2 \leq \alpha < 1$. This yields a maximal CFL condition
	\begin{equation*}
		\lambda^n \frac{\Delta t^n}{\Delta x} \leq \frac{1}{4},
	\text{ where }
		\lambda^n := \max_{i,j}(\sigma(\bbf'(\bbu_{i,j}^n), \sigma(\bbg'(\bbu_{i,j}^n)). 	
	\end{equation*}
\end{prop}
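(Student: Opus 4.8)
The plan is to exploit the convexity of $\mathcal{G}$: if $\bbu_{i,j}^{n+1}$ can be written as a convex combination of states that individually belong to $\mathcal{G}$, then $\bbu_{i,j}^{n+1}\in\mathcal{G}$. Writing out the update of Example~\ref{ex:D2Q5_O1},
\[
\bbu_{i,j}^{n+1}=\bbM_1(\bbu_{i-1,j}^n)+\bbM_2(\bbu_{i+1,j}^n)+\bbM_3(\bbu_{i,j-1}^n)+\bbM_4(\bbu_{i,j+1}^n)+\bbM_5(\bbu_{i,j}^n),
\]
I would factor $\tfrac{1-\alpha}{4}$ out of each flux wave so that, with $\mu:=\tfrac{2}{(1-\alpha)a}$,
\[
\bbM_1(\bbu)=\tfrac{1-\alpha}{4}\big(\bbu+\mu\bbf(\bbu)\big),\qquad \bbM_2(\bbu)=\tfrac{1-\alpha}{4}\big(\bbu-\mu\bbf(\bbu)\big),
\]
and similarly $\bbM_{3,4}(\bbu)=\tfrac{1-\alpha}{4}\big(\bbu\pm\mu\bbg(\bbu)\big)$, $\bbM_5(\bbu)=\alpha\bbu$. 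This exhibits $\bbu_{i,j}^{n+1}$ as a combination of the five states $\bbu_{i-1,j}^n+\mu\bbf(\bbu_{i-1,j}^n)$, $\bbu_{i+1,j}^n-\mu\bbf(\bbu_{i+1,j}^n)$, $\bbu_{i,j-1}^n+\mu\bbg(\bbu_{i,j-1}^n)$, $\bbu_{i,j+1}^n-\mu\bbg(\bbu_{i,j+1}^n)$ and $\bbu_{i,j}^n$, with weights $\tfrac{1-\alpha}{4}$ (four times) and $\alpha$. For $\alpha\in[0,1[$ these weights are nonnegative and sum to one, so the combination is genuinely convex.

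The problem then reduces to a single building block: I claim that whenever $\bbu\in\mathcal{G}$ and $\mu\,\sigma(\bbf'(\bbu))\le 1$, the shifted state $\bbu\pm\mu\bbf(\bbu)$ again lies in $\mathcal{G}$, and symmetrically for $\bbg$ with $\sigma(\bbg'(\bbu))$. Granting this, since the central state $\bbu_{i,j}^n\in\mathcal{G}$ by hypothesis and $\mathcal{G}$ is convex, every building block in the stencil belongs to $\mathcal{G}$ as soon as $\mu\,\lambda^n\le 1$, where $\lambda^n=\max_{i,j}(\sigma(\bbf'),\sigma(\bbg'))$ bounds every local spectral radius; hence $\bbu_{i,j}^{n+1}\in\mathcal{G}$.

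To prove the building-block claim I would compute the conserved components of $\bbu_+=\bbu+\mu\bbf(\bbu)$ explicitly for the Euler flux. Its density is $\rho(1+\mu v_x)$, which is positive because $|v_x|\le\sigma(\bbf'(\bbu))=|v_x|+c$ and $\mu\sigma(\bbf')\le1$ force $\mu|v_x|<1$; the cases $\bbu_-$ and the $\bbg$-shifts follow from the symmetries $v_x\to -v_x$ and $x\leftrightarrow y$. The delicate part — and the step I expect to be the \emph{main obstacle} — is the positivity of the internal energy $E_+-\tfrac12\rho_+\bbv_+^2$ of the shifted state. I would note that its $y$-momentum component factors cleanly as $\rho v_y(1+\mu v_x)$, express the remainder using $p=(\gamma-1)(E-\tfrac12\rho\bbv^2)$ and the sound speed $c^2=\gamma p/\rho$, and reduce to an inequality in the Mach number $M=v_x/c$ and the scaled parameter $\nu=\mu c\le 1/(|M|+1)$, checking that it stays strictly positive up to the boundary $\mu\sigma(\bbf')=1$ (where it degenerates to $0$ only as $|M|\to\infty$). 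Concavity of $\bbu\mapsto E-\tfrac12\rho\bbv^2$ on $\{\rho>0\}$ should streamline this bookkeeping.

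Finally I would convert the condition $\mu\lambda^n\le1$ into the stated CFL. Since $\mu=\tfrac{2}{(1-\alpha)a^n}$ and the exact-streaming constraint gives $a^n\Delta t^n/\Delta x=1$, one has $\mu\lambda^n=\tfrac{2}{1-\alpha}\,\lambda^n\tfrac{\Delta t^n}{\Delta x}$, so $\mu\lambda^n\le1$ is equivalent to $\lambda^n\Delta t^n/\Delta x\le\tfrac{1-\alpha}{2}$. Imposing $\alpha\in[\tfrac12,1[$ makes $\tfrac{1-\alpha}{2}\le\tfrac14$, the bound being attained at $\alpha=\tfrac12$, which yields the advertised maximal CFL number $1/4$.
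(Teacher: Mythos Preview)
Your argument is correct, and the Euler-specific building-block claim that $\bbu\pm\mu\bbf(\bbu)\in\mathcal{G}$ whenever $\mu\,\sigma(\bbf'(\bbu))\le 1$ does go through: writing $s=1+\mu v_x$, one finds after cancellation that the internal energy of the shifted state equals $\tfrac{ps}{\gamma-1}-\tfrac{\mu^2 p^2}{2\rho s}$, and the bound $s\ge\mu c$ (forced by $\mu(|v_x|+c)\le1$) together with $(\gamma-1)/(2\gamma)<1$ makes this strictly positive. However, your route differs from the paper's. The paper does \emph{not} use the one-point shifts $\bbu\pm\mu\bbf(\bbu)$; instead it reorganises the update as
\[
\bbu_{i,j}^{n+1}=(2\alpha-1)\,\bbu_{i,j}^n+\tfrac{1-\alpha}{2}\big(\bbu_{i+1/2,j}^{*}+\bbu_{i-1/2,j}^{*}+\bbu_{i,j+1/2}^{*}+\bbu_{i,j-1/2}^{*}\big),
\]
with the \emph{two-point} Lax--Friedrichs intermediate states $\bbu_{i+1/2,j}^{*}=\tfrac12(\bbu_{i,j}^n+\bbu_{i+1,j}^n)-\tfrac{1}{2\beta\lambda^n}(\bbf(\bbu_{i+1,j}^n)-\bbf(\bbu_{i,j}^n))$, whose membership in $\mathcal{G}$ is quoted from~\cite{Guermond2016} rather than computed. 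In that decomposition the constraint $\alpha\ge\tfrac12$ is what makes the weight $2\alpha-1$ nonnegative, so it appears as a genuine structural requirement, whereas in your decomposition the weights $(1-\alpha)/4$ and $\alpha$ are convex for every $\alpha\in[0,1)$ and the restriction $\alpha\ge\tfrac12$ enters only at the very end to force $(1-\alpha)/2\le\tfrac14$. Thus your proof actually establishes a slightly stronger statement (convex preservation under CFL $\le(1-\alpha)/2$ for all $\alpha\in[0,1)$), at the price of an Euler-specific verification. The paper's choice is not accidental, though: exactly this two-point decomposition \eqref{eq:D2Q5_convexproof_2}--\eqref{eq:D2Q5_convexproof_3} is reused verbatim in Section~\ref{sec:positive and BP} as the backbone of the blended scheme and the flux limiter, which is where the constraint $\alpha\ge\tfrac12$ is ultimately needed.
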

\begin{proof}
	We start by developing the Maxwellian  of the D2Q5 model in the update of the conserved variables with the first-order scheme:
	\begin{equation}\label{eq:D2Q5_convexproof_1}
    \begin{split}
		\bbu_{i,j}^{n+1} &= \bbM_1(\bbu_{i-1,j}^n) + \bbM_2(\bbu_{i+1,j}^n) + \bbM_3(\bbu_{i,j-1}^n) + \bbM_4(\bbu_{i,j+1}^n) + \bbM_5(\bbu_{i,j}^n) \\
		& = \alpha \bbu_{i,j}^n + \left( \frac{1-\alpha}{4} \right) \left(\bbu_{i-1,j}^n + \bbu_{i+1,j}^n + \bbu_{i,j-1}^n + \bbu_{i,j+1}^n\right)   \\
		& \qquad + \frac{\bbf(\bbu_{i-1,j}^n)-\bbf(\bbu_{i+1,j}^n) + \bbg(\bbu_{i,j-1}^n) - \bbg(\bbu_{i,j+1}^n)}{2a^n} 
        \\
		& = (2\alpha-1) \bbu_{i,j}^n + \left(\frac{1-\alpha}{4} \right) \left[ \bbu_{i-1,j}^n + \bbu_{i+1,j}^n + \bbu_{i,j-1}^n + \bbu_{i,j+1}^n + 4 \bbu_{i,j}^n \right] \\
		& \qquad - \frac{1}{2a^n} \left[ \bbf(\bbu_{i+1,j}^n) - \bbf(\bbu_{i,j}^n) + \bbf(\bbu_{i,j}^n) - \bbf(\bbu_{i-1,j}^n) \right]  \\
		& \qquad - \frac{1}{2a^n} \left[ \bbg(\bbu_{i,j+1}^n) - \bbg(\bbu_{i,j}^n) + \bbg(\bbu_{i,j}^n) - \bbg(\bbu_{i,j-1}^n) \right]. 
        \end{split}
	\end{equation}
	We then write the kinetic speed as $a^n = 2\beta \lambda^n/(1-\alpha)$ with $\beta \geq 1$ to satisfy Bouchut's criterion \eqref{eq:Bouchut_D2Q5} and define the following intermediate states
	\begin{equation}
	\begin{split}
		 \bbu_{i+1/2,j}^* &:= \frac{1}{2} \left( \bbu_{i+1,j}^n + \bbu_{i,j}^n \right) - \frac{1}{2\beta \lambda^n} \left( \bbf(\bbu_{i+1,j}^n) - \bbf(\bbu_{i,j}^n) \right), \\
		 \bbu_{i,j+1/2}^* &:= \frac{1}{2} \left( \bbu_{i,j+1}^n + \bbu_{i,j}^n \right) - \frac{1}{2 \beta \lambda^n} \left( \bbg(\bbu_{i,j+1}^n) - \bbg(\bbu_{i,j}^n) \right).
		\end{split} \label{eq:D2Q5_convexproof_3}
	\end{equation}
	Eq.~\eqref{eq:D2Q5_convexproof_1} can be written as function of these intermediate states as
	\begin{equation}
		\bbu_{i,j}^{n+1} = (2\alpha-1) \bbu_{i,j}^n + \left( \frac{1-\alpha}{2} \right) \left[ \bbu_{i+1/2,j}^* + \bbu_{i-1/2,j}^* + \bbu_{i,j+1/2}^* + \bbu_{i,j-1/2}^* \right]. \label{eq:D2Q5_convexproof_2}
	\end{equation}
	To get $\bbu_{i,j}^{n+1} \in \mathcal{G}$, it is sufficient to show that this decomposition is a convex decomposition of elements in $\mathcal{G}$.
	First, note that if the solution at time $t^n$ is in $\mathcal{G}$, then $\bbu_{i+1/2,j}^* \in \mathcal{G}$ thanks to the following convex decomposition
	\begin{equation*}
		\bbu_{i+1/2,j}^* = \frac{1}{\beta} \overline{\bbu}_{i+1/2,j} + \frac{\beta-1}{2 \beta} (\bbu_{i+1,j}^n + \bbu_{i,j}^n),
	\end{equation*} 
	where $\overline{\bbu}_{i+1/2,j} = (\bbu_{i+1,j}^n + \bbu_{i,j}^n)/2 - (\bbf(\bbu_{i+1,j}^n) - \bbf(\bbu_{i,j}^n))/(2\lambda^n)$ is the Riemann intermediate state, which is clearly in $\mathcal{G}$ (see \textit{e.g.}~\cite{Guermond2016}). The same stands for $\bbu_{i,j+1/2}^n$. Hence, Eq.~\eqref{eq:D2Q5_convexproof_2} is a convex decomposition of elements in $\mathcal{G}$ provided that $1/2 \leq \alpha < 1$. This leads to the following condition on the CFL number based on $\lambda^n$:
	\begin{equation*}
		\lambda^n \frac{\Delta t^n}{\Delta x} = \frac{a^n (1-\alpha)}{2\beta} \frac{\Delta t^n}{\Delta x} =  \frac{1-\alpha}{2\beta} \leq \frac{1}{4\beta} \leq 1/4.
	\end{equation*}
	\end{proof}	

\begin{prop}
Considering the D2Q5 model of Example~\ref{ex:D2Q5}, the first-order lattice Boltzmann scheme \eqref{eq:1st-order_kinetic} is equivalent to the following finite-volume scheme,
\begin{equation}
\bbu_{i,j}^{n+1} = \bbu_{i,j}^n - \frac{\Delta t^n}{\Delta x} \left( \bbF_{i+1/2,j}^{(1),n} - \bbF_{i-1/2,j}^{(1),n} + \bbG_{i,j+1/2}^{(1),n} - \bbG_{i,j-1/2}^{(1),n} \right), \label{eq:1st-order_FV}
\end{equation}
with the fluxes
\begin{equation}\label{eq:1st-order_flux}
\begin{split}
	 \bbF_{i+1/2,j}^{(1),n}& = \frac{1}{2} \left( \bbf(\bbu_{i+1,j}^n)+\bbf(\bbu_{i,j}^n) \right) - a^n \left( \frac{1-\alpha}{4} \right) \left(\bbu_{i+1,j}^n - \bbu_{i,j}^n\right), \\
	\bbG_{i,j+1/2}^{(1),n} &=\frac{1}{2} \left( \bbg(\bbu_{i,j+1}^n)+\bbg(\bbu_{i,j}^n) \right) - a^n \left( \frac{1-\alpha}{4} \right) \left(\bbu_{i,j+1}^n - \bbu_{i,j}^n\right).
	\end{split}
\end{equation}
With the strict subcharacteristic condition $a^n =2\lambda^n/(1-\alpha)$, it is equivalent to the global Lax-Friedrichs scheme at CFL number equal to $(1-\alpha)/2$.
\end{prop}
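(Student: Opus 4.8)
The plan is to start from the explicit kinetic update of Example~\ref{ex:D2Q5_O1}, namely $\bbu_{i,j}^{n+1}=\sum_k \bbM_k(\bbu_{i-\alpha_k,j-\beta_k}^n)$, and substitute the explicit Maxwellians of Example~\ref{ex:D2Q5}. This is precisely the computation already performed in the first two lines of \eqref{eq:D2Q5_convexproof_1}, giving
\begin{equation*}
\bbu_{i,j}^{n+1} = \alpha \bbu_{i,j}^n + \frac{1-\alpha}{4}\left(\bbu_{i-1,j}^n+\bbu_{i+1,j}^n+\bbu_{i,j-1}^n+\bbu_{i,j+1}^n\right) + \frac{\bbf(\bbu_{i-1,j}^n)-\bbf(\bbu_{i+1,j}^n)+\bbg(\bbu_{i,j-1}^n)-\bbg(\bbu_{i,j+1}^n)}{2a^n}.
\end{equation*}
The goal is then to reorganize this right-hand side into $\bbu_{i,j}^n$ minus a discrete flux divergence, so the problem reduces to a pure algebraic identity.

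Next I would compute the telescoping flux differences directly from \eqref{eq:1st-order_flux}. Writing $D:=a^n(1-\alpha)/4$, the $x$-direction contribution is
\begin{equation*}
\bbF_{i+1/2,j}^{(1),n}-\bbF_{i-1/2,j}^{(1),n} = \frac{1}{2}\left(\bbf(\bbu_{i+1,j}^n)-\bbf(\bbu_{i-1,j}^n)\right) - D\left(\bbu_{i+1,j}^n-2\bbu_{i,j}^n+\bbu_{i-1,j}^n\right),
\end{equation*}
and symmetrically in $y$ with $\bbg$ and indices $j\pm1$. Thus the flux divergence decomposes into a central difference of the physical fluxes plus a five-point discrete Laplacian of $\bbu$ weighted by $D$. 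Inserting this into \eqref{eq:1st-order_FV} and invoking the exact-streaming (unit kinetic CFL) condition $a^n\,\Delta t^n/\Delta x=1$, the prefactor $\Delta t^n/\Delta x$ multiplying the physical-flux differences becomes $1/a^n$, matching the $1/(2a^n)$ coefficients above, while the prefactor of the Laplacian becomes $\tfrac{\Delta t^n}{\Delta x}D=(1-\alpha)/4$. Collecting the diagonal terms, the $\bbu_{i,j}^n$ coming from the finite-volume update combines with the $-4\cdot(1-\alpha)/4=-(1-\alpha)$ produced by the Laplacian to give exactly the coefficient $\alpha$, reproducing the kinetic expansion term by term and establishing the equivalence of \eqref{eq:1st-order_kinetic} with \eqref{eq:1st-order_FV}--\eqref{eq:1st-order_flux}.

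For the final claim I would substitute the strict subcharacteristic choice $a^n=2\lambda^n/(1-\alpha)$ into the dissipation coefficient, which yields $D=a^n(1-\alpha)/4=\lambda^n/2$. The numerical fluxes \eqref{eq:1st-order_flux} then read $\tfrac{1}{2}(\bbf(\bbu_{i+1,j}^n)+\bbf(\bbu_{i,j}^n))-\tfrac{\lambda^n}{2}(\bbu_{i+1,j}^n-\bbu_{i,j}^n)$, which is precisely the global Lax-Friedrichs flux, i.e. a Rusanov flux with a single dissipation constant $\lambda^n$ that is the same at every interface. The CFL number follows from the same exact-streaming identity: since $\lambda^n=a^n(1-\alpha)/2$ and $\Delta t^n/\Delta x=1/a^n$, one gets $\lambda^n\,\Delta t^n/\Delta x=(1-\alpha)/2$, confirming the stated value.

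None of these steps is genuinely hard, since the proof is an algebraic verification. The only point requiring care is the bookkeeping that converts the kinetic coefficients $1/(2a^n)$ and $(1-\alpha)/4$ into finite-volume coefficients through $a^n\Delta t^n/\Delta x=1$ — in particular, tracking how the diagonal term of the discrete Laplacian recombines with $\bbu_{i,j}^n$ to recover the coefficient $\alpha$. I would also state explicitly that $\lambda^n$ here is the \emph{global} maximum of the spectral radii (as defined in Proposition~\ref{prop:convex_preservation_O1}), constant over all cells and interfaces, which is exactly what distinguishes the global Lax-Friedrichs scheme from its local (cellwise) counterpart.
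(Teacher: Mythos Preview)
Your proposal is correct and follows essentially the same approach as the paper: both start from the expanded form \eqref{eq:D2Q5_convexproof_1}, use the exact-streaming identity $a^n\Delta t^n/\Delta x=1$ to recast the kinetic update as a conservative flux difference, and then specialize to $a^n=2\lambda^n/(1-\alpha)$ to identify the global Lax-Friedrichs flux and the CFL number $(1-\alpha)/2$. The paper's proof is simply terser, while you spell out the telescoping of the flux differences and the recombination of the Laplacian diagonal with $\bbu_{i,j}^n$ explicitly.
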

\begin{proof}
	We start with Eq.~\eqref{eq:D2Q5_convexproof_1} and using  $a^n = \Delta x/\Delta t^n$,  the scheme  can be put under the form of Eq.~\eqref{eq:1st-order_FV} with the fluxes \eqref{eq:1st-order_flux}. 	With $a^n=2\lambda^n/(1-\alpha)$, we recognize the global Lax-Friedrichs fluxes in two dimensions. The CFL number based on $\lambda^n$ is $\lambda^n \Delta t^n/\Delta x = a^n \Delta t^n(1-\alpha)/(2\Delta x)=(1-\alpha)/2$.
\end{proof}


 \subsection{Second-order scheme}
 
 To build a second-order accurate method in space and time, we approximate the right-hand-side term of Eq.~\eqref{eq:integration_charac} with the trapezium rule (equivalent to the Crank-Nicholson scheme). This leads to
 \begin{equation}
 \begin{split}
 	\bbu_{k,i+\alpha_k, j+\beta_k}^{\varepsilon, n+1} &- \bbu_{k,i,j}^{\varepsilon, n} \\= \frac{\Delta t^n}{2 \varepsilon} \bigg[ &\bbM_k \left( \bbu_{i,j}^{\varepsilon, n} \right) - \bbu_{k,i,j}^{\varepsilon, n} + \bbM_k \left( \bbu_{i+\alpha_k,j+\beta_k}^{\varepsilon,n+1} \right) - \bbu_{k,i+\alpha_k, j+\beta_k}^{\varepsilon,n+1} \bigg].\label{eq:order2_implicit}
	\end{split}
 \end{equation}
 As in the construction of the first-order scheme and following~\cite{He1998}, this scheme is implicit but can be made explicit by considering the new variable
 \begin{equation}
 	\overline{\bbu}_{k,i,j}^{\varepsilon, n} \stackrel{def}{=} \bbu_{k,i,j}^{\varepsilon, n} - \frac{\Delta t^n}{2\varepsilon} \left[\bbM_k(\bbu_{i,j}^{\varepsilon, n}) - \bbu_{k,i,j}^{\varepsilon, n} \right].\label{eq:variable_change_O2}
 \end{equation}
 Equation \eqref{eq:order2_implicit} then reads
 \begin{equation*}
 \begin{split}
 	\overline{\bbu}_{k,i+\alpha_k, j+\beta_k}^{\varepsilon, n+1} & = \overline{\bbu}_{k,i,j}^{\varepsilon, n} + \frac{\Delta t^n}{\varepsilon} \left[ \bbM_k(\bbu_{i,j}^{\varepsilon, n}) - \bbu_{k,i,j}^{\varepsilon, n} \right] \\
 	& = \overline{\bbu}_{k,i,j}^{\varepsilon, n} + \frac{\Delta t}{\varepsilon + \Delta t/2} \left[ \bbM_k(\bbu_{i,j}^{\varepsilon, n}) - \overline{\bbu}_{k,i,j}^{\varepsilon, n} \right],
	\end{split}
 \end{equation*} 
 where 
 \eqref{eq:variable_change_O2} has been used to obtain the second equation. 
 The change of variable \eqref{eq:variable_change_O2} preserves the zeroth-order moment so that $\bbu_{i,j}^{\varepsilon, n} = \sum_k \overline{\bbu}_{k,i,j}^{\varepsilon, n}$. Hence, the scheme is explicit. We finish the construction by letting $\varepsilon \rightarrow 0$ which, eventually removing the $\varepsilon$ exponent and the $\overline{\cdot}$ for the sake of simplicity, leads to
 \begin{equation}
 	\bbu_{k,i,j}^{n+1} = 2 \bbM_k \left(\bbu_{i-\alpha_k, j-\beta_k}^{n} \right) - \bbu_{k,i-\alpha_k, j-\beta_k}^{n}. \label{eq:2nd-order_kinetic}
 \end{equation}
 
 \begin{example}[Second-order scheme with the D2Q5 model]
 	\label{ex:D2Q5_order2}
	Following the choices of Example~\ref{ex:D2Q5}, the second-order scheme reads
	\begin{equation*}
	\begin{split}
		 \bbu_{1,i,j}^{n+1} &= 2 \bbM_1(\bbu_{i-1,j}^n) - \bbu_{1,i-1,j}^n, \qquad \bbu_{2,i,j}^{n+1} = 2\bbM_2(\bbu_{i+1,j}^n) - \bbu_{2,i+1,j}^n, \\
		 \bbu_{3,i,j}^{n+1} &= 2 \bbM_3 (\bbu_{i,j-1}^n) - \bbu_{3,i,j-1}^n, \qquad \bbu_{4,i,j}^{n+1} = 2 \bbM_4(\bbu_{i,j+1}^n) - \bbu_{4,i,j+1}^n, \\
		\bbu_{5,i,j}^{n+1} &= 2 \bbM_5(\bbu_{i,j}^n) - \bbu_{5,i,j}^n.
		\end{split}
	\end{equation*}
\end{example}

\begin{prop}
	With the D2Q5 model of Example~\ref{ex:D2Q5}, the second-order sche\-me \eqref{eq:2nd-order_kinetic} is consistent with the target hyperbolic system \eqref{eq:hyperbolic_PDE}.	
\end{prop}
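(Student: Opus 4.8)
The plan is to run a modified-equation (Chapman--Enskog-type) analysis of the explicit update \eqref{eq:2nd-order_kinetic}. The exact-streaming constraint $\dx = a^n\dt^n$ means every index shift $(i-\alpha_k,j-\beta_k)$ is the upstream displacement $(x-e_{x,k}\dt,\,y-e_{y,k}\dt)$, so I may treat $\bbu_k$ and $\bbu=\sum_k\bbu_k$ as smooth fields sampled on the lattice and expand in the single small parameter $\dt$. Writing $\mathcal{S}_k\phi$ for the value of a field $\phi$ at that upstream point, $A_k:=e_{x,k}\partial_x+e_{y,k}\partial_y$ and $D_k:=\partial_t+A_k$, the scheme reads $\bbu_k^{n+1}=2\,\mathcal{S}_k\bbM_k(\bbu^n)-\mathcal{S}_k\bbu_k^n$. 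The central object is the non-equilibrium part $\bbw_k:=\bbu_k-\bbM_k(\bbu)$, which by the first condition in \eqref{eq:conditions_maxwellian} satisfies $\sum_k\bbw_k=0$ exactly.

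First I would determine how $\bbw_k$ is slaved to $\bbu$. Substituting $\bbu_k=\bbM_k(\bbu)+\bbw_k$ and cancelling one streamed copy of the Maxwellian gives
\[
\bbw_k^{n+1}+\mathcal{S}_k\bbw_k^n=\mathcal{S}_k\bbM_k(\bbu^n)-\bbM_k(\bbu^{n+1}).
\]
Taylor expanding to first order, the right-hand side is $-\dt\,D_k\bbM_k(\bbu)+O(\dt^2)$; assuming $\bbw_k=O(\dt)$ the left-hand side is $2\bbw_k+O(\dt^2)$. This yields the slaving relation $\bbw_k=-\tfrac{\dt}{2}\,D_k\bbM_k(\bbu)+O(\dt^2)$, which in particular confirms $\bbw_k=O(\dt)$ self-consistently.

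Next I would sum \eqref{eq:2nd-order_kinetic} over $k$ and Taylor expand. The $O(1)$ part collapses to $\bbu$ via $\sum_k\bbM_k(\bbu)=\bbu$ and $\sum_k\bbw_k=0$. At order $\dt$ the contribution is $-\sum_k A_k\bbM_k(\bbu)+\sum_k A_k\bbw_k$; the first sum equals $\partial_x\bbf(\bbu)+\partial_y\bbg(\bbu)$ by the flux conditions in \eqref{eq:conditions_maxwellian}, while the second is $O(\dt)$ since each $\bbw_k=O(\dt)$ and therefore contributes only at order $\dt^2$. Matching against $\bbu^{n+1}=\bbu+\dt\,\partial_t\bbu+O(\dt^2)$ gives $\partial_t\bbu+\partial_x\bbf(\bbu)+\partial_y\bbg(\bbu)=O(\dt)$, which is the claimed consistency with \eqref{eq:hyperbolic_PDE}.

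The delicate step is justifying that $\bbw_k$ stays $O(\dt)$ throughout the iteration. The recursion extracted above has the form $\bbw_k^{n+1}\approx-\mathcal{S}_k\bbw_k^n+O(\dt)$, whose homogeneous part carries the factor $-1$: it is only marginally stable and supports a parasitic $(-1)^n$ mode (the same mechanism behind the spurious oscillations mentioned in the introduction). I would control this by assuming well-prepared initial data, $\bbu_{k,i,j}^0=\bbM_k(\bbu_{i,j}^0)$ up to the $O(\dt)$ slaving correction, so that the oscillatory mode is not excited and the particular solution $\bbw_k=-\tfrac{\dt}{2}D_k\bbM_k(\bbu)$ is realized; even when excited, its amplitude stays $O(\dt)$ and does not alter the leading truncation error. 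As an independent check, \eqref{eq:2nd-order_kinetic} is precisely the trapezoidal (Crank--Nicolson) discretization of \eqref{eq:kinetic} along characteristics, hence consistent with \eqref{eq:kinetic}, which is itself consistent with \eqref{eq:hyperbolic_PDE} as $\varepsilon\to0$ under \eqref{eq:conditions_maxwellian}; the expansion above is exactly the controlled interchange of these two limits at the level of the leading truncation error.
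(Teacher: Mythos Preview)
Your argument is correct: the Chapman--Enskog/modified-equation expansion you outline does establish that the macroscopic field $\bbu=\sum_k\bbu_k$ satisfies $\partial_t\bbu+\partial_x\bbf(\bbu)+\partial_y\bbg(\bbu)=O(\dt)$, and your discussion of the slaving relation and the marginally stable $(-1)^n$ mode in $\bbw_k$ is honest about where the delicate point lies. However, the paper takes a genuinely different route. It does not track the non-equilibrium part $\bbw_k$ at all; instead, following Bellotti \textit{et al.}, it eliminates the kinetic variables algebraically. Using a moment matrix $\mathbf{Q}$ and the Cayley--Hamilton theorem applied to the combined streaming--collision operator, the second-order scheme is rewritten as an \emph{exact} five-step finite-difference recursion involving only $\bbu_{i,j}^{n},\dots,\bbu_{i,j}^{n+5}$ (Eq.~\eqref{eq:multistep_FD}), and consistency with~\eqref{eq:hyperbolic_PDE} is then read off by Taylor-expanding that recursion as $\dt,\dx\to0$. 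The gain of the paper's approach is that it removes any need to argue about well-prepared initialization or boundedness of the parasitic kinetic mode: once the scheme lives purely on $\bbu$, consistency is the classical finite-difference notion and nothing has to be assumed about $\bbw_k$. Your approach, by contrast, is more elementary, avoids the lattice-specific spectral computation carried out in Appendix~\ref{appendixA}, and would apply verbatim to any kinetic model satisfying~\eqref{eq:conditions_maxwellian}, not just the D2Q5 lattice.
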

\begin{proof}
	Inspired by the work of Bellotti \textit{et al.}~\cite{Bellotti2022}, we can show (see Appendix~\ref{appendixA}) 
    that the second-order scheme can be reinterpreted as the following multi-step finite difference scheme over the conserved variables:
	\begin{equation}
	\begin{split}
	& \bbu_{i,j}^{n+5} + (4A_a-1) (\bbu_{i,j}^{n+4}-\bbu_{i,j}^{n+1}) + (4A_d-4A_a + 2) (\bbu_{i,j}^{n+3}-\bbu_{i,j}^{n+2}) - \bbu_{i,j}^n \label{eq:multistep_FD} \\
	& + 2 \left[ D_x \bbf(\bbu_{i,j}^{n+4})/a_{n+4} + 3 \overline{D}_x \bbf(\bbu_{i,j}^{n+3})/a_{n+3} + 3 \overline{D}_x \bbf(\bbu_{i,j}^{n+2})/a_{n+2} + D_x \bbf(\bbu_{i,j}^{n+1})/a_{n+1} \right]  \\
	& + 2 \left[ D_y \bbg(\bbu_{i,j}^{n+4})/a_{n+4} + 3 \overline{D}_y \bbg(\bbu_{i,j}^{n+3})/a_{n+3} + 3 \overline{D}_y \bbg(\bbu_{i,j}^{n+2})/a_{n+2} + D_y \bbg(\bbu_{i,j}^{n+1})/a_{n+1} \right] \\
	& + 2(1-\alpha) \left[ (1-A_a) (\bbu_{i,j}^{n+4}-\bbu_{i,j}^{n+1}) + (3A_a-2A_d-1)(\bbu_{i,j}^{n+3}-\bbu_{i,j}^{n+2}) \right] = \mathbf{0}, 
	\end{split}
	\end{equation}
	where
	\begin{equation}\label{eq:A_ad}
	\begin{split}
	A_a \bbu_{i,j} &= \frac{1}{4} \left( \bbu_{i+1,j} + \bbu_{i-1,j} + \bbu_{i,j+1} + \bbu_{i,j-1} \right), 
	 \\
	A_d \bbu_{i,j} &= \frac{1}{4} \left( \bbu_{i+1,j+1} + \bbu_{i-1,j+1} + \bbu_{i-1,j+1} + \bbu_{i-1,j-1} \right),
	\\
 D_x \bbf(\bbu_{i,j})& = \frac{1}{2} (\bbf(\bbu_{i+1,j}) - \bbf(\bbu_{i-1,j})), \\
	\overline{D}_x \bbf(\bbu_{i,j})& = \frac{1}{3} \left( D_x \bbf(\bbu_{i,j-1}) + D_x \bbf(\bbu_{i,j}) + D_x \bbf(\bbu_{i,j+1})\right), \\
	 D_y \bbg(\bbu_{i,j})& = \frac{1}{2} (\bbg(\bbu_{i,j+1}) - \bbg(\bbu_{i,j-1})), \\
	 \overline{D}_y \bbg(\bbu_{i,j}) &= \frac{1}{3} \left( D_y \bbg(\bbu_{i-1,j}) + D_y \bbg(\bbu_{i,j}) + D_y \bbg(\bbu_{i+1,j}) \right).
	\end{split}
	\end{equation}
	We then notice that $a^{n+1} = a^n + \mathcal{O}(\Delta t^n) = \Delta t^n/\Delta x + \mathcal{O}(\Delta t^n)$, and similarly for $a_{n+2}$, $a_{n+3}$ and $a_{n+4}$. We finally let $\Delta t^n, \Delta x \rightarrow 0$ in \eqref{eq:multistep_FD} and observe that it is consistent with \eqref{eq:hyperbolic_PDE}.
\end{proof}

\begin{prop}
	The second-order kinetic scheme \eqref{eq:2nd-order_kinetic} can be equivalently written under the following finite-volume form
	\begin{equation}
		\bbu_{i,j}^{n+1} = \bbu_{i,j}^n - \frac{\Delta t^n}{\Delta x} \left( \bbF_{i+1/2,j}^{(2),n} - \bbF_{i-1/2,j}^{(2),n} + \bbG_{i,j+1/2}^{(2),n} - \bbG_{i,j-1/2}^{(2),n} \right) \label{eq:2nd-order_FV}, 
	\end{equation}	
	with the fluxes
	\begin{equation}\label{eq:2nd-order_flux}
	\begin{split}
		 \bbF_{i+1/2,j}^{(2),n} &= \bbF_{i+1/2,j}^{(1),n} + \Delta \bbF_{i+1/2,j}^n, \ 
		 \bbG_{i,j+1/2}^{(2),n} = \bbG_{i,j+1/2}^{(1),n} + \Delta \bbG_{i,j+1/2}^n, 
		\\
		 \Delta \bbF_{i+1/2,j}^n &= a^n \left[ \bbM_1(\bbu_{i,j}^n) - \bbu_{1,i,j}^n - \bbM_2(\bbu_{i+1,j}^n) + \bbu_{2,i+1,j}^n \right], \\
		 \Delta \bbG_{i,j+1/2}^n& = a^n \left[ \bbM_3(\bbu_{i,j}^n) - \bbu_{3,i,j}^n - \bbM_4(\bbu_{i,j+1}^n) + \bbu_{4,i,j+1}^n \right].
		\end{split}
	\end{equation}
\end{prop}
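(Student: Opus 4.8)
The plan is to sum the five distribution updates \eqref{eq:2nd-order_kinetic} to form the conserved-variable update, peel off the first-order finite-volume scheme, and match the leftover to the divergence of $\Delta\bbF$ and $\Delta\bbG$. Summing \eqref{eq:2nd-order_kinetic} over $k$ and using $\bbu_{i,j}^{n+1}=\sum_k\bbu_{k,i,j}^{n+1}$ yields
\begin{equation*}
\bbu_{i,j}^{n+1}=2\sum_{k}\bbM_k(\bbu_{i-\alpha_k,j-\beta_k}^n)-\sum_{k}\bbu_{k,i-\alpha_k,j-\beta_k}^n.
\end{equation*}
By the first-order relation \eqref{eq:1st-order_kinetic}, the first sum is exactly the first-order conserved update, which \eqref{eq:1st-order_FV} writes in finite-volume form. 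I would therefore split $\bbu_{i,j}^{n+1}$ as this first-order update plus a remainder $R_{i,j}^n:=\sum_k\bigl[\bbM_k(\bbu_{i-\alpha_k,j-\beta_k}^n)-\bbu_{k,i-\alpha_k,j-\beta_k}^n\bigr]$, reducing the claim to showing $R_{i,j}^n=-\tfrac{\Delta t^n}{\Delta x}\bigl(\Delta\bbF_{i+1/2,j}^n-\Delta\bbF_{i-1/2,j}^n+\Delta\bbG_{i,j+1/2}^n-\Delta\bbG_{i,j-1/2}^n\bigr)$.

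Introducing the shorthand $\bbm_{k,i,j}^n:=\bbM_k(\bbu_{i,j}^n)-\bbu_{k,i,j}^n$ for the non-equilibrium (relaxation) contribution, the remainder reads, after inserting the D2Q5 shifts $\alpha_k=\{1,-1,0,0,0\}$, $\beta_k=\{0,0,1,-1,0\}$,
\begin{equation*}
R_{i,j}^n=\bbm_{1,i-1,j}^n+\bbm_{2,i+1,j}^n+\bbm_{3,i,j-1}^n+\bbm_{4,i,j+1}^n+\bbm_{5,i,j}^n.
\end{equation*}
On the other side, the proposed corrections regroup as $\Delta\bbF_{i+1/2,j}^n=a^n(\bbm_{1,i,j}^n-\bbm_{2,i+1,j}^n)$ and $\Delta\bbG_{i,j+1/2}^n=a^n(\bbm_{3,i,j}^n-\bbm_{4,i,j+1}^n)$. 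Using the exact-streaming identity $a^n\Delta t^n/\Delta x=1$, the flux divergence telescopes:
\begin{equation*}
\begin{split}
-\tfrac{\Delta t^n}{\Delta x}\bigl(&\Delta\bbF_{i+1/2,j}^n-\Delta\bbF_{i-1/2,j}^n+\Delta\bbG_{i,j+1/2}^n-\Delta\bbG_{i,j-1/2}^n\bigr)\\
&=\bbm_{1,i-1,j}^n+\bbm_{2,i+1,j}^n+\bbm_{3,i,j-1}^n+\bbm_{4,i,j+1}^n-\sum_{k=1}^{4}\bbm_{k,i,j}^n.
\end{split}
\end{equation*}

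Comparing the two expressions, the equivalence reduces to the single identity $-\sum_{k=1}^4\bbm_{k,i,j}^n=\bbm_{5,i,j}^n$, that is $\sum_{k=1}^5\bbm_{k,i,j}^n=0$; this is precisely the zeroth-moment consistency $\sum_k\bbM_k(\bbu_{i,j}^n)=\bbu_{i,j}^n=\sum_k\bbu_{k,i,j}^n$ guaranteed by \eqref{eq:conditions_maxwellian}. I do not anticipate a genuine obstacle: once the non-equilibrium shorthand is fixed the computation is routine, and the only real care is the bookkeeping of shifted indices in the telescoping, where the central-wave ($k=5$) term is supplied exactly by the moment-conservation identity. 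The strict subcharacteristic value of $a^n$ plays no role in this purely algebraic equivalence; it enters only in the quantitative interpretation of the fluxes.
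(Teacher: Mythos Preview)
Your proof is correct and follows essentially the same route as the paper: sum the kinetic updates, peel off the first-order finite-volume update, and reduce the remainder to a telescoping flux difference by invoking the zeroth-moment identity $\sum_k(\bbM_k(\bbu_{i,j}^n)-\bbu_{k,i,j}^n)=0$ to absorb the stationary $k=5$ contribution. The paper carries out the same computation without your shorthand $\bbm_{k,i,j}^n$, writing the $k=5$ substitution explicitly as $\bbM_5(\bbu_{i,j}^n)-\bbu_{5,i,j}^n=-\sum_{k=1}^4(\bbM_k(\bbu_{i,j}^n)-\bbu_{k,i,j}^n)$, but the logic is identical.
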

\begin{proof}
	From the scheme provided in Example~\ref{ex:D2Q5_order2}, we compute
	\begin{equation*}
	\begin{split}
		\bbu_{i,j}^{n+1} & = \sum_{k=1}^5 \bbu_{k,i,j}^{n+1}  = \bbu_{i,j}^{(1),n+1} + \bbM_1(\bbu_{i-1,j}^n) - \bbu_{1,i-1,j}^n + \bbM_2(\bbu_{i+1,j}^n) - \bbu_{2,i+1,j}^n \nonumber \\
		& + \bbM_3(\bbu_{i,j-1}^n) - \bbu_{3,i,j-1}^n + \bbM_4(\bbu_{i,j+1}^n) - \bbu_{4,i,j+1}^n + \bbM_5(\bbu_{i,j}^n) - \bbu_{5,i,j}^n,
		\end{split}
	\end{equation*}
	where $\bbu_{i,j}^{(1),n+1}$ is the update of the conserved variables that would be obtained with the first-order scheme. Then noticing that $\bbu_{5,i,j}^n = \bbu_{i,j}^n - \sum_{k=1}^4 \bbu_{k,i,j}^n$ and that $\bbM_5(\bbu_{i,j}^n) = \bbu_{i,j}^n - \sum_{k=1}^4 \bbM_k(\bbu_{i,j}^n)$, we have
	\begin{equation*}
	\begin{split}
		\bbu_{i,j}^{n+1} = \bbu_{i,j}^{(1),n+1} &+ \bbM_1(\bbu_{i-1,j}^n) - \bbM_1(\bbu_{i,j}^n) - \bbu_{1,i-1,j}^n + \bbu_{1,i,j}^n  \\
		& + \bbM_2(\bbu_{i+1,j}^n) - \bbM_2(\bbu_{i,j}^n) - \bbu_{2,i+1,j}^n + \bbu_{2,i,j}^n \\
		& + \bbM_3(\bbu_{i,j-1}^n) - \bbM_3(\bbu_{i,j}^n) - \bbu_{3,i,j-1}^n + \bbu_{3,ij}^n  \\
		& + \bbM_4(\bbu_{i,j+1}^n) - \bbM_4(\bbu_{i,j}^n) - \bbu_{4,i,j+1}^n + \bbu_{4,i,j}^n,
		\end{split}
	\end{equation*}
	which can be put under the form \eqref{eq:2nd-order_FV} with the fluxes \eqref{eq:2nd-order_flux}.
\end{proof}


\begin{rmk}
Investigating the linear stability of the second-order scheme can be done by a von Neumann analysis, see for example~\cite{Sterling1996}. In this context, it was shown in~\cite{Graille2014} that a two-velocity (D1Q2) LBM is $L^2$-stable in the linear scalar case. More recently, a similar proof was provided for the D1Q3 lattice in~\cite{Bellotti2024}. However, proving the von Neumann stability of the LBM with the D2Q5 model considered in this work is a more tedious task that we leave for future investigation.
\end{rmk}

\section{Positive- and bound-preserving blended scheme}
\label{sec:positive and BP}

\subsection{Problem statement}

As shown for example in~\cite{Graille2014}, the second-order vectorial LB scheme is subject to strong oscillations close to discontinuities. This makes it unsuitable for compressible flows. To reduce these oscillations and ensure convex preservation properties of the vectorial lattice Boltzmann scheme, we propose a blending between the second-order scheme \eqref{eq:2nd-order_kinetic} and the first-order scheme \eqref{eq:1st-order_kinetic}. The idea is to introduce a new local parameter $\theta$ which dynamically varies between $0$ and $1$. When $\theta=0$, the blended scheme reduces to the first-order, while when $\theta=1$, it reduces to the second-order. A first possibility to introduce a blending is to affect one value of $\theta$ in each cell of the mesh, and build a blended scheme like
\begin{equation}
	\bbu_{k,i,j}^{n+1} = \bbM_k(\bbu_{i-\alpha_k, j-\beta_k}^n) + \theta_{i-\alpha_k, j-\beta_k}^n \left[ \bbM_k(\bbu_{i-\alpha_k, j-\beta_k}^n) - \bbu_{k,i-\alpha_k, j-\alpha_k}^n \right], \label{eq:naive_blending}
\end{equation}
where $\theta_{i,j}^n$ is a blending parameter that depends on the known solution at time $t_n$ to be defined. Note that the blending of \eqref{eq:naive_blending} follows the simple \textit{collide and stream} algorithm of the standard LBM~\cite{Kruger2017}: it can be decomposed into a local ``collision'' (right-hand-side term), followed by an exact transport phase. As such, as demonstrated in~\cite{Wissocq2022} (see Appendix B), the zeroth moment $\bbu_{i,j}=\sum_k \bbu_{k,i,j}$ of \eqref{eq:naive_blending} is a conserved variables despite the introduction of $\theta_{i,j}^n$. Note for example that the artificial viscosity introduced in~\cite{Wissocq2023} can be interpreted as a blending like the one in \eqref{eq:naive_blending}. In a similar fashion, the a posteriori correction developed in~\cite{Kozhanova2024} is a blending of this type with a blending parameter $\theta_{i,j}^n \in \{0, 1\}$.

In the present work, we want to introduce a blending parameter acting as a flux limiter, switching from the first-order fluxes \eqref{eq:1st-order_flux} to the second-order ones \eqref{eq:2nd-order_flux}. The blending of \eqref{eq:naive_blending} does not satisfy such property.  
Therefore, this question is addressed in the section below.
  For the sake of simplicity, we now restrict ourselves to the D2Q5 model of Example~\ref{ex:D2Q5}, but the method is more general.

\subsection{Construction of the blended scheme}

The idea behind the construction of the proposed blended schemes lies on the bijection between distributions and moments. With the D2Q5 lattice, we can define a moments matrix $\mathbf{Q}$ as in \eqref{eq:moments_matrix} with which we define moments as
\begin{equation}
	[\bbu_{i,j}^n, \bbv_{1,i,j}^n, \bbv_{2,i,j}^n, \bbw_{1,i,j}^n, \bbw_{2,i,j}^n]= \mathbf{Q} [\bbu_{1,i,j}^n, \bbu_{2,i,j}^n, \bbu_{3,i,j}^n, \bbu_{4,i,j}^n, \bbu_{5,i,j}^n]^T.
\end{equation} 
Note that the choice of the matrix $\mathbf{Q}$ is arbitrary, we only need that the first row of $\mathbf{Q}$ yields the computation of the zeroth moment and that $\mathbf{Q}$ is invertible. With these definitions, the first-order scheme reads
\begin{equation*}
\begin{split}
	 \bbu_{i,j}^{n+1}& = \bbu_{i,j}^n - \frac{\Delta t^n}{\Delta x} \left[ \bbF_{i+1/2,j}^{(1),n} - \bbF_{i-1/2,j}^{(1),n}  \right] - \frac{\Delta t^n}{\Delta x} \left[ \bbG_{i,j+1/2}^{(1),n} - \bbG_{i,j-1/2}^{(1),n}  \right], \\
	 \bbv_{1,i,j}^{n+1}& = \bbM_1(\bbu_{i-1,j}^n) - \bbM_2(\bbu_{i+1,j}^n), 
	\quad \bbv_{2,i,j}^{n+1} = \bbM_3(\bbu_{i,j-1}^n) - \bbM_4(\bbu_{i,j+1}^n), \\
	 \bbw_{1,i,j}^{n+1}& = \bbM_1(\bbu_{i-1,j}^n) + \bbM_2(\bbu_{i+1,j}^n), 
	\quad \bbw_{2,i,j}^{n+1} = \bbM_3(\bbu_{i,j-1}^n) +\bbM_4(\bbu_{i,j+1}^n).
\end{split}
\end{equation*}
The second-order scheme reads
\begin{equation*}
\begin{split}
	 \bbu_{i,j}^{n+1}& = \bbu_{i,j}^n - \frac{\Delta t^n}{\Delta x} \left[ \bbF_{i+1/2,j}^{(2),n} - \bbF_{i-1/2,j}^{(2),n}  \right] - \frac{\Delta t^n}{\Delta x} \left[ \bbG_{i,j+1/2}^{(2),n} - \bbG_{i,j-1/2}^{(2),n}  \right], \\
	 \bbv_{1,i,j}^{n+1}& = 2 (\bbM_1(\bbu_{i-1,j}^n) - \bbM_2(\bbu_{i+1,j}^n)) - \bbu_{1,i-1,j}^n + \bbu_{2,i+1,j}^n, \\
	\bbv_{2,i,j}^{n+1} &= 2(\bbM_3(\bbu_{i,j-1}^n) - \bbM_4(\bbu_{i,j+1}^n)) - \bbu_{3,i,j-1}^n + \bbu_{4,i,j+1}^n, \\
	\bbw_{1,i,j}^{n+1} &= 2 (\bbM_1(\bbu_{i-1,j}^n) + \bbM_2(\bbu_{i+1,j}^n)) - \bbu_{1,i-1,j}^n - \bbu_{2,i+1,j}^n, \\
	 \bbw_{2,i,j}^{n+1} &= 2(\bbM_3(\bbu_{i,j-1}^n) + \bbM_4(\bbu_{i,j+1}^n)) - \bbu_{3,i,j-1}^n - \bbu_{4,i,j+1}^n.
	\end{split}
\end{equation*}
To introduce a blending that acts as a flux limiter
, we suggest the following:
\begin{equation*}
\begin{split}
	 \bbu_{i,j}^{n+1} &= \bbu_{i,j}^n - \frac{\Delta t^n}{\Delta x} \left[ \bbF_{i+1/2,j}^{n} - \bbF_{i-1/2,j}^{n}  \right] - \frac{\Delta t^n}{\Delta x} \left[ \bbG_{i,j+1/2}^{n} - \bbG_{i,j-1/2}^{n}  \right], \\
	  \bbF_{i+1/2,j}^n &= \bbF_{i+1/2,j}^{(1),n} + \theta_{i+1/2,j}^n \Delta \bbF_{i+1/2,j}^n,
	 \qquad \bbG_{i,j+1/2}^n = \bbG_{i,j+1/2}^{(1),n} + \theta_{i,j+1/2}^n \Delta \bbG_{i,j+1/2}^n, \\
	 \bbv_{1,i,j}^{n+1} &= \bbM_1(\bbu_{i-1,j}^n) - \bbM_2(\bbu_{i+1,j}^n) \nonumber \\
	& \qquad + \theta_{i-1/2,j}^n \left[ \bbM_1(\bbu_{i-1,j}^n) - \bbu_{1,i-1,j}^n \right] - \theta_{i+1/2,j}^n \left[ \bbM_2(\bbu_{i+1,j}^n) - \bbu_{2,i+1,j}^n \right], \\
	 \bbv_{2,i,j}^{n+1}& = \bbM_3(\bbu_{i,j-1}^n) - \bbM_4(\bbu_{i,j+1}^n) \nonumber \\
	& \qquad + \theta_{i,j-1/2}^n \left[ \bbM_3(\bbu_{i,j-1}^n) - \bbu_{3,i,j-1}^n \right] - \theta_{i,j+1/2}^n \left[ \bbM_4(\bbu_{i,j+1}^n) - \bbu_{4,i,j+1}^n \right], \\
	 \bbw_{1,i,j}^{n+1} &= \bbM_1(\bbu_{i-1,j}^n) + \bbM_2(\bbu_{i+1,j}^n) \nonumber \\
	& \qquad + \theta_{i-1/2,j}^n \left[ \bbM_1(\bbu_{i-1,j}^n) - \bbu_{1,i-1,j}^n \right] + \theta_{i+1/2,j}^n, \left[ \bbM_2(\bbu_{i+1,j}^n) - \bbu_{2,i+1,j}^n \right], \\
	 \bbw_{2,i,j}^{n+1} &= \bbM_3(\bbu_{i,j-1}^n) + \bbM_4(\bbu_{i,j+1}^n) \nonumber \\
	& \qquad + \theta_{i,j-1/2}^n \left[ \bbM_3(\bbu_{i,j-1}^n) - \bbu_{3,i,j-1}^n \right] + \theta_{i,j+1/2}^n \left[ \bbM_4(\bbu_{i,j+1}^n) - \bbu_{4,i,j+1}^n \right].
	\end{split}
\end{equation*}
Coming back to the kinetic distributions by left-multiplying this scheme by $\mathbf{Q}^{-1}$ leads to the suggested blended vectorial LBM, which reads, after some computations,
\begin{equation}\label{eq:blending_u1}
\begin{split}
	 \bbu_{1,i,j}^{n+1} &= \bbM_1(\bbu_{i-1,j}^n) + \theta_{i-1/2,j}^n \left[ \bbM_1(\bbu_{i-1,j}^n) - \bbu_{1,i-1,j}^n \right], \\
	 \bbu_{2,i,j}^{n+1} &= \bbM_2(\bbu_{i+1,j}^n) + \theta_{i+1/2,j}^n \left[ \bbM_2(\bbu_{i+1,j}^n) - \bbu_{2,i+1,j}^n \right],\\
	 \bbu_{3,i,j}^{n+1} &= \bbM_3(\bbu_{i,j-1}^n) + \theta_{i,j-1/2}^n \left[ \bbM_3(\bbu_{i,j-1}^n) - \bbu_{3,i,j-1}^n \right], \\
	 \bbu_{4,i,j}^{n+1} &= \bbM_4(\bbu_{i,j+1}^n) + \theta_{i,j+1/2}^n \left[ \bbM_4(\bbu_{i,j+1}^n) - \bbu_{4,i,j+1}^n \right],
	\end{split}
	\end{equation}
	\begin{equation}\label{eq:blending_u5}
	\begin{split}
	 \bbu_{5,i,j}^{n+1} &= \bbM_5(\bbu_{i,j}^n) - \theta_{i-1/2,j}^n \left[ \bbM_2(\bbu_{i,j}^n) - \bbu_{2,i,j}^n \right] - \theta_{i+1/2,j}^n \left[ \bbM_1(\bbu_{i,j}^n) - \bbu_{1,i,j}^n \right]  \\
	& \qquad - \theta_{i,j-1/2}^n \left[\bbM_4(\bbu_{i,j}^n) - \bbu_{4,i,j}^n \right] - \theta_{i,j+1/2}^n \left[\bbM_3(\bbu_{i,j}^n) - \bbu_{3,i,j}^n \right]. 
\end{split}
\end{equation}
Note that, except for the blending parameters, all the operations done in the update of $\bbu_5$ in Eq.~\eqref{eq:blending_u5} are localized in $(i,j)$. They can be absorbed in the final update of $\bbu_{i,j}$ as
\begin{equation}
\begin{split}
    \bbu_{i,j}^{n+1} & = \bbu_{1,i,j}^{n+1} + \bbu_{2,i,j}^{n+1} + \bbu_{3,i,j}^{n+1} + \bbu_{4,i,j}^{n+1} + \alpha \bbu_{i,j}^n  \\
    & - \theta_{i-1/2,j}^n \left[\bbM_2(\bbu_{i,j}^n) - \bbu_{2,i,j}^n \right] - \theta_{i+1/2,j}^n \left[ \bbM_1(\bbu_{i,j}^n) - \bbu_{1,i,j}^n \right]  \\
    & - \theta_{i,j-1/2}^n \left[ \bbM_4(\bbu_{i,j}^n) - \bbu_{4,i,j}^n \right] - \theta_{i,j+1/2}^n \left[ \bbM_3(\bbu_{i,j}^n) - \bbu_{3,i,j}^n \right]. \label{eq:blending_u}
    \end{split}
\end{equation}
Replacing \eqref{eq:blending_u5} by \eqref{eq:blending_u}, we see that neither $\bbu_5$ nor $\bbM_5$ actually appear in the scheme. This approach could help save memory space, thereby increasing efficiency.

\begin{rmk}
	One may argue that, compared to the 	blended scheme of~\eqref{eq:naive_blending}, one lose an interesting locality property with the blending of \eqref{eq:blending_u1}-\eqref{eq:blending_u5} because they involve blending parameters computed at different locations in the apparent collision phase. In fact, since the blending parameters themselves are generally not local in space, even the collision phase in the blending of \eqref{eq:naive_blending} would be nonlocal.
\end{rmk}

The next question is how to define the blending parameters $\theta_{i+1/2,j}^n$ and $\theta_{i,j+1/2}^n$. To see how these parameters affect the updated conserved \textit{a priori}, especially the positivity of density and pressure, we perform a convex decomposition of the solution $\bbu_{i,j}^{n+1}$. We first see that
\begin{equation*}
\begin{split}
	\bbu_{i,j}^{n+1}  = & \bbu_{i,j}^n - \frac{\Delta t^n}{\Delta x} \left[ \bbF_{i+1/2,j}^{(1),n} - \bbF_{i-1/2,j}^{(1),n} + \bbG_{i,j+1/2}^{(1),n} - \bbG_{i,j-1/2}^{(1),n} \right]  \\
	& - \frac{\Delta t^n}{\Delta x} \Big[ \theta_{i+1/2,j}^n \Delta \bbF_{i+1/2,j}^n - \theta_{i-1/2,j}^n \Delta \bbF_{i-1.2,j}^n  \\
	&  \qquad \qquad  + \theta_{i,j+1/2}^n \Delta \bbG_{i,j+1/2}^n - \theta_{i,j-1/2}^n \Delta \bbG_{i,j-1/2}^n  \Big],
	\end{split}
\end{equation*}
Then, using the convex decomposition of the first-order terms \eqref{eq:D2Q5_convexproof_2}, we have
\begin{equation}
	\bbu_{i,j}^{n+1} = (2\alpha -1) \bbu_{i,j}^n + \frac{1-\alpha}{2} \left[ \bbu_{i+1/2,j}^{(1)} + \bbu_{i-1/2,j}^{(2)} + \bbu_{i,j+1/2}^{(3)} + \bbu_{i,j-1/2}^{(4)} \right], \label{eq:convex_decomposition_blending}
\end{equation}
where (the $\bbu^*$ are defined in \eqref{eq:D2Q5_convexproof_3})
\begin{equation*}
\begin{split}
	 \bbu_{i+1/2,j}^{(1)} &= \bbu_{i+1/2,j}^* - \frac{\theta_{i+1/2,j}^n}{\kappa^n}  \Delta \bbF_{i+1/2,j}^n,
	 \qquad \bbu_{i+1/2,j}^{(2)} = \bbu_{i+1/2,j}^* + \frac{\theta_{i+1/2,j}^n}{\kappa^n} \Delta \bbF_{i+1/2,j}^n, \\
	 \bbu_{i,j+1/2}^{(3)}& = \bbu_{i,j+1/2}^* - \frac{\theta_{i,j+1/2}^n}{\kappa^n} \Delta \bbG_{i,j+1/2}^n, 
\qquad \bbu_{i,j+1/2}^{(3)} = \bbu_{i,j+1/2}^* + \frac{\theta_{i,j+1/2}^n}{\kappa^n} \Delta \bbG_{i,j+1/2}^n,
	\end{split}
\end{equation*}
with $\kappa^n = a^n(1-\alpha)/2$. If $1/2 \leq \alpha < 1$, Eq.~\eqref{eq:convex_decomposition_blending} is a convex decomposition of the update variables. Hence, if we can ensure that $\bbu_{i,j}^n$, $\bbu_{i+1/2,j}^{(1)}$, $\bbu_{i+1/2,j}^{(2)}$, $\bbu_{i,j+1/2}^{(3)}$ and $\bbu_{i,j+1/2}^{(4)}$ satisfy some bounds inside a convex domain (\textit{e.g} local and global min/max principles), then the updated solution will satisfy the same bounds. In the present work, we focus on the following bounds:
\begin{enumerate}
	\item Density bounds: we want to ensure that
		$\mu_{i,j}^n \leq \rho_{i,j}^{n+1} \leq \nu_{i,j}^n,$
	where $\mu_{i,j}$ and $\nu_{i,j}$ are respectively local or global minima and maxima. Several choices can be considered for these extrema: positivity preservation (PP), local maximum principle (LMP)  or relaxed local maximum principle (RLMP). These choices are discussed in Sec.~\ref{sec:theta_rho}. This leads to first estimations of the blending parameters named $\theta_{i+1/2,j}^{\rho,n}$ and $\theta_{i,j+1/2}^{\rho,n}$. 
	\item Pressure bounds: we want to ensure positivity of the pressure in $\bbu_{i,j}^{n+1}$. The constraints on the blending parameters satisfying this condition are detailed in Sec.~\ref{sec:theta_p}. This leads to a second estimation of the blending parameters $\theta_{i+1/2,j}^{p,n}$ and $\theta_{i+1/2,j}^{p,n}$.
\end{enumerate} 
In the end, we take $\theta_{i+1/2,j}^n = \min(1, \theta_{i+1/2,j}^{\rho,n}, \theta_{i+1/2,j}^{p,n})$ and similarly for $\theta_{i,j+1/2}^n$. 
Let us first focus on the density bounds. From now on, we assume that $1/2 \leq \alpha < 1$, ensuring the convexity of the decomposition in \eqref{eq:convex_decomposition_blending}.

\subsection{Flux limiter for density bounds}
\label{sec:theta_rho}

We first detail how conditions on the blending parameters can be obtained  to ensure LMP for density, with a definition of local extrema involving intermediate Riemann states. The construction of the blending parameters ensuring PP and RLMP for density will then be very similar.

\subsubsection{Local maximum principle (LMP) based on intermediate states}
\label{sec:density_LE}

Inspired by~\cite{Vilar2024}, we choose local extrema as
\begin{equation}
\begin{split}
	 \mu_{i,j}^n &= \min(\rho_{i,j}^n, \rho_{i-1/2,j}^*, \rho_{i+1/2,j}^*, \rho_{i,j-1/2}^*, \rho_{i,j+1/2}^*), \label{eq:LMP_minmax} \\
	 \nu_{i,j}^n &= \max(\rho_{i,j}^n, \rho_{i-1/2,j}^*, \rho_{i+1/2,j}^*, \rho_{i,j-1/2}^*, \rho_{i,j+1/2}^*).
	 \end{split}
\end{equation}
From \eqref{eq:convex_decomposition_blending}, we have the convex decomposition
$$\rho_{i,j}^{n+1} = (2\alpha-1) \rho_{i,j}^n + \frac{1-\alpha}{2} \left[ \rho_{i+1/2,j}^{(1)} + \rho_{i+1/2,j}^{(2)} + \rho_{i,j+1/2}^{(3)} + \rho_{i,j+1/2}^{(4)} \right],
$$
where
\begin{equation*}
\begin{split}
	 \rho_{i+1/2,j}^{(1)} &= \rho_{i+1/2,j}^* - \frac{\theta_{i+1/2,j}^n}{\kappa^n}  \Delta F_{i+1/2,j}^{\rho,n}, \quad \rho_{i+1/2,j}^{(2)} = \rho_{i+1/2,j}^* + \frac{\theta_{i+1/2,j}^n}{\kappa^n} \Delta F_{i+1/2,j}^{\rho,n}, \\
	 \rho_{i,j+1/2}^{(3)} &= \rho_{i,j+1/2}^* - \frac{\theta_{i,j+1/2}^n}{\kappa^n} \Delta G_{i,j+1/2}^{\rho,n},\quad \rho_{i,j+1/2}^{(3)} = \rho_{i,j+1/2}^* + \frac{\theta_{i,j+1/2}^n}{\kappa^n} \Delta G_{i,j+1/2}^{\rho,n},
		\end{split}
\end{equation*}
where $\Delta F^{\rho} = \Delta \bbF \cdot [1,0,0,0]^T$ and similarly for $\Delta G^\rho$.
By construction of the local extrema, $\rho_{i,j}^n \in [\mu_{i,j}^n, \nu_{i,j}^n]$. It is therefore sufficient to ensure that the intermediate states satisfy these local bounds, i.e.
\begin{equation*}
\begin{split}
	\mu_{i+1,j}^n&\leq \rho_{i+1/2,j}^{(1)} \leq \nu_{i+1,j}^n, \qquad 
	\mu_{i,j}^n \leq \rho_{i+1/2,j}^{(2)} \leq \nu_{i,j}^n, \\
	\mu_{i,j+1}^n& \leq \rho_{i,j+1/2}^{(3)} \leq \nu_{i,j+1}^n, \qquad \mu_{i,j}^n \leq \rho_{i,j+1/2}^{(4)} \leq \nu_{i,j}^n.
	\end{split}
\end{equation*}
We focus on $\rho^{(1)}$ and $\rho^{(2)}$ first. There are several cases depending on the sign of $\Delta F_{i+1/2,j}^{\rho,n}$. If $\Delta F_{i+1/2,j}^{\rho,n}=0$, then $\rho_{i+1/2,j}^{(1)}$ and $\rho_{i+1/2,j}^{(2)}$ satisfy the local bounds whatever the value of $\theta_{i+1/2,j}^n$: we can take $\theta_{i+1/2,j}^n=1$. Suppose now that $\Delta F_{i+1/2,j}^{\rho,n} > 0$. Since $\theta_{i+1/2,j}^n \geq 0$, the local bounds $\rho_{i+1/2,j}^{(1)} \geq \mu_{i+1,j}^n$ and $\rho_{i+1/2,j}^{(2)} \leq \nu_{i,j}^n$ are satisfied. Therefore, we only need to ensure the two remaining conditions, i.e.
$$\theta_{i+1/2,j}^n \leq \kappa^n\, \frac{\nu_{i+1,j}^n - \rho_{i+1/2,j}^*}{\Delta F_{i+1/2,j}^{\rho, n}}, \qquad \theta_{i+1/2,j}^n \leq \kappa^n\,  \frac{\rho_{i+1/2,j}^* - \mu_{i,j}^n}{\Delta F_{i+1/2,j}^{\rho,n}}.
\\$$
Conversely, if $\Delta F_{i+1/2,j}^{\rho,n} <0$, the local bounds $\rho_{i+1/2,j}^{(1)} \leq \nu_{i+1,j}^n$ and $\rho_{i+1/2,j}^{(2)} \geq \mu_{i,j}^n$ are satisfied. Therefore we need to ensure the two remaining conditions
$$\theta_{i+1/2,j}^n \leq \kappa^n \, \frac{ \rho_{i+1/2,j}^* - \mu_{i+1,j}^n }{-\Delta F_{i+1/2,j}^{\rho, n}}, \qquad \theta_{i+1/2,j}^n \leq \kappa^n \, \frac{ \nu_{i,j}^n - \rho_{i+1/2,j}^*}{-\Delta F_{i+1/2,j}^{\rho,n}}.
$$
The procedure for $\theta_{i,j+1/2}^n$ is similar. In the end, we obtain
\begin{equation*}
\begin{split}
	\theta_{i+1/2,j}^{\rho,n}& = 
	\begin{cases}
	1 & \mathrm{if}\ \Delta F_{i+1/2,j}^{\rho,n} = 0, \\
	\kappa^n \min(\nu_{i+1,j}^n - \rho_{i+1/2,j}^*, \rho_{i+1/2,j}^*-\mu_{i,j}^n)/ \Delta F_{i+1/2,j}^{\rho,n} & \mathrm{if}\ \Delta F_{i+1/2,j}^{\rho,n} >0, \\
	-\kappa^n \min(\rho_{i+1/2,j}^*-\mu_{i+1,j}^n, \nu_{i,j}-\rho_{i+1/2,j}^*)/\Delta F_{i+1/2,j}^{\rho,n} & \mathrm{if}\ \Delta F_{i+1/2,j}^{\rho,n} < 0,
	\end{cases} \\
	 \theta_{i,j+1/2}^{\rho,n} &= 
	\begin{cases}
	1 & \mathrm{if}\ \Delta G_{i,j+1/2}^{\rho,n} = 0, \\
	\kappa^n \min(\nu_{i,j+1}^n - \rho_{i,j+1/2}^*, \rho_{i,j+1/2}^*-\mu_{i,j}^n)/ \Delta G_{i,j+1/2}^{\rho,n} & \mathrm{if}\ \Delta G_{i,j+1/2}^{\rho,n} >0, \\
	- \kappa^n \min(\rho_{i,j+1/2}^*-\mu_{i,j+1}^n, \nu_{i,j}-\rho_{i,j+1/2}^*)/\Delta G_{i,j+1/2}^{\rho,n} & \mathrm{if}\ \Delta G_{i,j+1/2}^{\rho,n} < 0.
	\end{cases} 
	\end{split}
\end{equation*}

\subsubsection{Relaxed local maximum principle (RLMP)}

The LMP constraint may be too strong for practical applications, leading to an increase of artificial diffusion even in absence of numerical oscillations: remember that for the Euler equations, the only physical constraint is $\rho>0$. 
For this reason, inspired by \cite{Yang2009}, we propose the use of relaxed local bounds defined as
\begin{equation*}
\begin{split}
 \mu_{i,j}^n &= 0.999\min(\rho_{i,j}^n, \rho_{i-1/2,j}^*, \rho_{i+1/2,j}^*, \rho_{i,j-1/2}^*, \rho_{i,j+1/2}^*), \\
 \nu_{i,j}^n& = 1.001\max(\rho_{i,j}^n, \rho_{i-1/2,j}^*, \rho_{i+1/2,j}^*, \rho_{i,j-1/2}^*, \rho_{i,j+1/2}^*).
	\end{split}
\end{equation*}
These bounds are less strict and still guaranty $\rho>0$. The intermediate states are all in $[\mu_{i,j}^n, \nu_{i,j}^n]$ and all the conclusions drawn previously 
 remain the same. We only have to modify the expression of the bounds $\mu_{i,j}^n$ and $\nu_{i,j}^n$ in the blending parameters.

\subsubsection{Positivity preservation (PP)}

Here we only want to ensure $\rho_{i,j}^{n+1} >0$ provided that $\rho_{i,j}^n>0$. A similar analysis as in Sec.~\ref{sec:density_LE} leads to the following constraints on the blending parameters:
$$
\theta_{i+1/2,j}^{n} < \kappa^n \frac{\rho_{i+1/2,j}^*}{|\Delta F_{i+1/2,j}^{\rho,n}|}, \qquad \theta_{i,j+1/2}^{n} < \kappa^n \frac{\rho_{i,j+1/2}^*}{|\Delta G_{i,j+1/2}^{\rho,n}|}.
$$
Note that contrary to the LMP and RLMP cases, these inequalities are strict to avoid the zero-density case. To satisfy these constraints, we set, with $\epsilon=10^{-16}$,
$$\theta_{i+1/2,j}^{\rho,n} = \max \left( 0, \kappa^n \frac{\rho_{i+1/2,j}^*}{|\Delta F_{i,j+1/2}^{\rho,n}|} - \epsilon \right), \quad \theta_{i+1/2,j}^{\rho,n} = \max \left( 0, \kappa^n \frac{\rho_{i+1/2,j}^*}{|\Delta G_{i+1/2,j}^{\rho,n}|} - \epsilon \right).
$$
\subsubsection{Relationship between the density bounds}

By construction, note that all the intermediate state densities are positive. Hence, the PP criterion is less strict  than the RLMP one, which is in turn less restrictive than the LMP one: 
$$0 \leq \left( \theta_{i+1/2,j}^{\rho,n} \right)_{LMP} \leq \left( \theta_{i+1/2,j}^{\rho,n} \right)_{RLMP} \leq \left( \theta_{i+1/2,j}^{\rho,n} \right)_{PP},$$
and similarly for $\theta_{i,j+1/2}^{\rho,n}$. Therefore, we expect the LMP model to be more dissipative than the RLMP one, which is in turn more dissipative than the PP one.

\subsection{Pressure positivity}
\label{sec:theta_p}

Following~\cite{Wu2023}, the pressure positivity condition can be equivalently written as the constraint
\begin{equation}
	\bbu_{i,j}^{n+1} \cdot \bbw > 0, \qquad \bbw=[(w_1^2+w_2^2)/2, -w_1, -w_2, 1]^T, \qquad \forall w_1,w_2 \in \mathbb{R}.
	\label{eq:GQL}
\end{equation}
This comes from the fact that
\begin{equation*}
	\bbu \cdot \bbw = \frac{p}{\gamma-1} + \frac{1}{2} \rho \left[ (v_1 - w_1)^2 + (v_2-w_2)^2 \right]. 
\end{equation*}
Let us assume that at time $t^n$, $\bbu_{i,j}^n \cdot \bbw >0$ for any mesh point $(i,j)$. Then, following \eqref{eq:convex_decomposition_blending}, $\bbu_{i,j}^{n+1} \cdot \bbw>0$ is satisfied provided that $\bbu_{i+1/2,j}^{(1)} \cdot \bbw >0$, $\bbu_{i+1/2,j}^{(2)} \cdot \bbw >0$, $\bbu_{i,j+1/2}^{(3)} \cdot \bbw >0$ and $\bbu_{i,j+1/2}^{(4)} \cdot \bbw >0$. These conditions are satisfied if 
\begin{equation*}
	\theta_{i+1/2,j}^n < \kappa^n \frac{\bbu_{i+1/2,j}^* \cdot \bbw}{|\Delta \bbF_{i+1/2,j}^n \cdot \bbw|}, \qquad \theta_{i,j+1/2}^n < \kappa^n \frac{\bbu_{i,j+1/2}^* \cdot \bbw}{|\Delta \bbG_{i,j+1/2}^n \cdot \bbw|},
\end{equation*}
for any $\bbw$ defined in \eqref{eq:GQL}. Let us focus on the first condition regarding $\theta_{i+1/2,j}^n$. Similarly to the one-dimensional case in~\cite{Pampa2024}, we see that we have to minimize an expression of the form
$$
	\varphi(w_1, w_2) := \frac{\rho^*(w_1^2 + w_2^2)-2m_1^* w_1 - 2m_2^* w_2 + 2E^*}{|\Delta F^\rho (w_1^2 + w_2^2) - 2 w_1 \Delta F^{m_1} - 2 w_2 \Delta F^{m_2} + \Delta F^E |},
$$
where $m_1^*=\bbu^* \cdot [0,1,0,0]^T$, $m_2^*=\bbu^* \cdot [0,0,1,0]^T$, $E^* = \bbu^* \cdot [0,0,0,1]^T$, $\Delta F^{m_1} = \Delta \bbF \cdot [0,1,0,0]^T$, $\Delta F^{m_2} = \Delta \bbF \cdot [0,0,1,0]^T$ and $\Delta F^{E} = \Delta \bbF \cdot [0,0,0,1]^T$. Using homogeneity, we rewrite it as
$
	\varphi(\bbz) = \frac{\langle \bbz, \mathbf{B} \bbz \rangle}{|\langle \bbz, \mathbf{A} \bbz \rangle|},
$ 
with $\bbz = [z_1,z_2,z_3]^T$, $z_i\in \R$ and the symmetric matrices
$$
	\mathbf{A} = 
	\begin{bmatrix}
		\Delta F^\rho & 0 & -\Delta F^{m_1} \\
		0 & \Delta F^\rho & -\Delta F^{m_2} \\
		- \Delta F^{m_1} & -\Delta F^{m_2} & 2 \Delta F^E
	\end{bmatrix}
	, \qquad \mathbf{B} = 
	\begin{bmatrix}
		\rho^* & 0 & -m_1^* \\
		0 & \rho^* & -m_2^* \\
		-m_1^* & -m_2^* & 2E^*
	\end{bmatrix}.
$$
Note that since $\langle \bbz, \mathbf{B} \bbz \rangle=z_3^2 \bbu^* \cdot \big (\bbz/z_3^2\big ) >0$, we know that $\mathbf{B}$ is positive definite. Our problem is to minimize $\varphi(\bbz)$ for $\bbz \in \mathbb{R}^3 \setminus \{\mathbf{0}\}$. Following~\cite{Pampa2024}, this can be done by solving an eigenvalue problem as
$$\min_{\bbz \in \mathbb{R}^3\setminus \{\mathbf{0}\}} \varphi(\bbz) = \left( \rho(\mathbf{B}^{-1/2} \mathbf{A} \mathbf{B}^{-1/2})  \right)^{-1}.
$$
Furthermore, as demonstrated in \cite{Pampa2024b}, the eigenvalues of $\mathbf{B}^{-1/2} \mathbf{A} \mathbf{B}^{-1/2}$ can be computed analytically: we get
$
	 \left\{ \frac{\Delta F^\rho}{\rho^*}, \frac{-\gamma_1 \pm \sqrt{\Delta}}{2 \gamma_0} \right\}, 
$ 
with
$\gamma_0 = {m_1^*}^2 + {m_2^*}^2 - 2 \rho^* E^*$, $\gamma_1 = 2 (\Delta F^\rho E^* + \Delta F^E \rho^* - \Delta F^{m_1} m_1^* - \Delta F^{m_2} m_2^*) $ and 
\begin{equation*}
	\Delta = \gamma_1^2 -4\gamma_0 \left[(\Delta F^{m_1})^2+(\Delta F^{m_2})^2-2 \Delta F^\rho \Delta F^E \right] \geq 0.
\end{equation*}
We also note that since $\gamma_0 = -\det(\mathbf{B})/\rho^* <0$, the spectral radius of $\mathbf{B}^{-1/2} \mathbf{A} \mathbf{B}^{-1/2}$ is given by
\begin{equation*}
	\rho(\mathbf{B}^{-1/2} \mathbf{A} \mathbf{B}^{-1/2}) = \max \left( \frac{|\Delta F^\rho|}{\rho^*}, \frac{|\gamma_1|+\sqrt{\Delta}}{-2\gamma_0} \right). 
\end{equation*}  
We finally take
\begin{equation*}
	\theta_{i+1/2,j}^{p,n} = \max \left(0,  \frac{\kappa^n}{\rho(\mathbf{B}^{-1/2} \mathbf{A} \mathbf{B}^{-1/2})} - \epsilon \right),
\end{equation*}
where the parameter $\epsilon = 10^{-16}$ is introduced to ensure a strict inequality in order to avoid the critical case of vanishing pressure.

\section{Numerical results}

In this section, our numerical strategy is assessed on well-known one- and two-dimensional problems involving the compressible Euler equations for fluid dynamics. All the numerical simulations proposed below are performed with the D2Q5 model of Example \ref{ex:D2Q5} with $\alpha=1/2$. It is the lowest value ensuring Eq.~\eqref{eq:convex_decomposition_blending} to be a convex decomposition. Except for Sec.~\ref{sec:sinus_entropy} where a particular choice is done, the kinetic speed is updated at each iteration as
$$
	a^n = 4 \lambda^n, \qquad \lambda^n = \max_{i,j} \left( \sqrt{v_1^2+v_2^2} + \sqrt{\gamma p/\rho} \right)^n_{i,j},
$$
which satisfies the critical case of Bouchut's criterion~\eqref{eq:Bouchut_D2Q5}. The time step is updated accordingly as $\Delta t^n = \Delta x / a^n$, which leads to a common CFL condition $\lambda^n \Delta t^n/\Delta x=0.25$. Unless otherwise stated, the blending parameters are always chosen in order to satisfy pressure positivity and one of the following constraints for the density: PP, LMP or RLMP. For all the test cases, the initialization of distributions is done as $\bbu_{k,i,j}^0 = \bbM_k(\bbu_0(x_i,y_j))$. All the one-dimensional test cases are performed with the two-dimensional method with $4$ cells and periodicity in the $y$-direction, and with $v_2=0$. In all cases, $\gamma=1.4$ except for LeBlanc and the astrophysical jets where $\gamma=5/3$, and the near vacuum smooth isentropic wave where $\gamma=3$.

\subsection{Sinusoidal entropy wave}
\label{sec:sinus_entropy}

This first example aims at evaluating the impact of the blending parameter on the numerical accuracy of the LB scheme in the case of a simple, linear, transport of an entropy wave in two dimensions. We consider a domain of size $[0,1]\times[0,1]$ with periodic boundaries and the initial condition
\begin{equation*}
	(\rho, v_1, v_2, p)_0(x,y) = (1 + 0.1 \sin(2\pi(x+y)), 1, 1, 1).
\end{equation*}
The solution is $\bbu(x,y,t)=\bbu_0(x-t,y-t)$. Numerical simulations are performed for different mesh sizes $\Delta x$ and for the three types of density bounds proposed in Sec.~\ref{sec:theta_rho} (PP, LMP and RLMP), until the final time $t=1$, which corresponds to one period of the sinusoidal wave in the square domain. For this test case, the kinetic speed is chosen as  $a^n = 10.7$. This ensures that $a^n \geq 4 \lambda^n$ and that the final time $t=1$ is a multiple of the time step for all the considered meshes. The $L^1$-, $L^2$ and $L^\infty$ errors  are reported in Table~\ref{tab:Orders_sin_entropy}.

\begin{table}[!ht]
\begin{subtable}{1.\textwidth}
\centering
\begin{tabular}{|p{0.07\textwidth}||>{\centering}p{0.16\textwidth}>{\centering}p{0.05\textwidth}|>{\centering}p{0.16\textwidth}>{\centering}p{0.05\textwidth}|>{\centering}p{0.16\textwidth}>{\centering\arraybackslash}p{0.05\textwidth}|}
\hline
$\Delta x$ & $L^1$-error & rate & $L^2$-error & rate & $L^\infty$-error & rate \\
\hline
$1/20$ & $3.23472\ 10^{-3}$ & $-$ & $3.57255\ 10^{-3}$ & $-$ & $4.57474\ 10^{-3}$ & $-$ \\ 
$1/40$ & $7.94386\ 10^{-4}$ & $2.03$ & $8.79313\ 10^{-4}$ & $2.02$ & $1.13088\ 10^{-3}$ & $2.02$ \\ 
$1/80$ & $1.97593\ 10^{-4}$ & $2.01$ & $2.18980\ 10^{-4}$ & $2.01$ & $2.82227\ 10^{-4}$ & $2.00$ \\ 
$1/160$ & $4.93618\ 10^{-5}$ & $2.00$ & $5.46922\ 10^{-5}$ & $2.00$ & $7.04894\ 10^{-5}$ & $2.00$ \\ 
$1/320$ & $1.23379\ 10^{-5}$ & $2.00$ & $1.36698\ 10^{-5}$ & $2.00$ & $1.76181\ 10^{-5}$ & $2.00$ \\ 
$1/640$ & $3.08429\ 10^{-6}$ & $2.00$ & $3.41724\ 10^{-6}$ & $2.00$ & $4.40429\ 10^{-6}$ & $2.00$ \\ 
\hline
\end{tabular}
\caption{PP limiter}
\end{subtable} \\
\begin{subtable}{1.\textwidth}
\centering
\begin{tabular}{|p{0.07\textwidth}||>{\centering}p{0.16\textwidth}>{\centering}p{0.05\textwidth}|>{\centering}p{0.16\textwidth}>{\centering}p{0.05\textwidth}|>{\centering}p{0.16\textwidth}>{\centering\arraybackslash}p{0.05\textwidth}|}
\hline
$\Delta x$ & $L^1$-error & rate & $L^2$-error & rate & $L^\infty$-error & rate \\
\hline
$1/20$ & $6.90503\ 10^{-3}$ & $-$ & $7.74117\ 10^{-3}$ & $-$ & $1.21124\ 10^{-2}$ & $-$ \\ 
$1/40$ & $2.13413\ 10^{-3}$ & $1.69$ & $2.68948\ 10^{-3}$ & $1.53$ & $5.13947\ 10^{-3}$ & $1.24$ \\ 
$1/80$ & $6.80204\ 10^{-4}$ & $1.65$ & $9.48031\ 10^{-4}$ & $1.50$ & $2.23845\ 10^{-3}$ & $1.20$ \\ 
$1/160$ & $2.02793\ 10^{-4}$ & $1.75$ & $3.25638\ 10^{-4}$ & $1.54$ & $9.30885\ 10^{-4}$ & $1.27$ \\ 
$1/320$ & $5.86654\ 10^{-5}$ & $1.79$ & $1.07246\ 10^{-4}$ & $1.60$ & $3.87212\ 10^{-4}$ & $1.27$ \\ 
$1/640$ & $1.77588\ 10^{-5}$ & $1.72$ & $3.50599\ 10^{-5}$ & $1.61$ & $1.65626\ 10^{-4}$ & $1.23$ \\ 
\hline
\end{tabular}
\caption{LMP limiter}
\end{subtable} \\
\begin{subtable}{1.\textwidth}
\centering
\begin{tabular}{|p{0.07\textwidth}||>{\centering}p{0.16\textwidth}>{\centering}p{0.05\textwidth}|>{\centering}p{0.16\textwidth}>{\centering}p{0.05\textwidth}|>{\centering}p{0.16\textwidth}>{\centering\arraybackslash}p{0.05\textwidth}|}
\hline
$\Delta x$ & $L^1$-error & rate & $L^2$-error & rate & $L^\infty$-error & rate \\
\hline
$1/20$ & $3.49578\ 10^{-3}$ & $-$ & $4.04074\ 10^{-3}$ & $-$ & $5.21913\ 10^{-3}$ & $-$ \\ 
$1/40$ & $1.04876\ 10^{-3}$ & $1.74$ & $1.29517\ 10^{-3}$ & $1.64$ & $2.32756\ 10^{-3}$ & $1.16$ \\ 
$1/80$ & $3.15374\ 10^{-4}$ & $1.73$ & $3.89949\ 10^{-4}$ & $1.73$ & $7.91117\ 10^{-4}$ & $1.56$ \\ 
$1/160$ & $7.58163\ 10^{-5}$ & $2.06$ & $9.32482\ 10^{-5}$ & $2.06$ & $1.96402\ 10^{-4}$ & $2.01$ \\ 
$1/320$ & $1.23694\ 10^{-5}$ & $2.62$ & $1.36899\ 10^{-5}$ & $2.77$ & $1.79201\ 10^{-5}$ & $3.45$ \\ 
$1/640$ & $3.08429\ 10^{-6}$ & $2.00$ & $3.41724\ 10^{-6}$ & $2.00$ & $4.40429\ 10^{-6}$ & $2.02$ \\ 
\hline
\end{tabular}
\caption{RLMP limiter}
\end{subtable}
\caption{Errors and convergence rate obtained by the blended LB schemes on the sinusoidal entropy wave.}
\label{tab:Orders_sin_entropy}
\end{table}

For the PP scheme, a second order of accuracy is obtained, as we do not need any non linear limiter to guaranty the positivity of $\rho$ and $p$. With the LMP scheme, a decrease of the order is observed, especially when the error is computed in the $L^\infty$-norm. Indeed,  $\theta < 1$ near the extrema of the entropy wave because of the strict choice of extrema in Eq.~\eqref{eq:LMP_minmax} for reducing numerical oscillations. Relaxing this constraint with the RLMP limiter leads to a decrease in the numerical errors.

\subsection{Sod shock tube}
In the second test case, we validate the ability of the method to deal with shock waves and contact discontinuities. We consider the Sod shock tube~\cite{Sod1978} defined on a one-dimensional domain of size $[0,1]$. This test case is computed with the PP, LMP and RLMP schemes with $N=100$ points in the $x$-direction and zero-gradient of all the distribution functions at the left and right boundaries. Moreover, to avoid spurious reflections, a sponge zone is introduced by setting $\theta=0$ in a layer of $5$ cells on the right and left boundaries.
 
\begin{figure}[h!]
	\centering
	\includegraphics{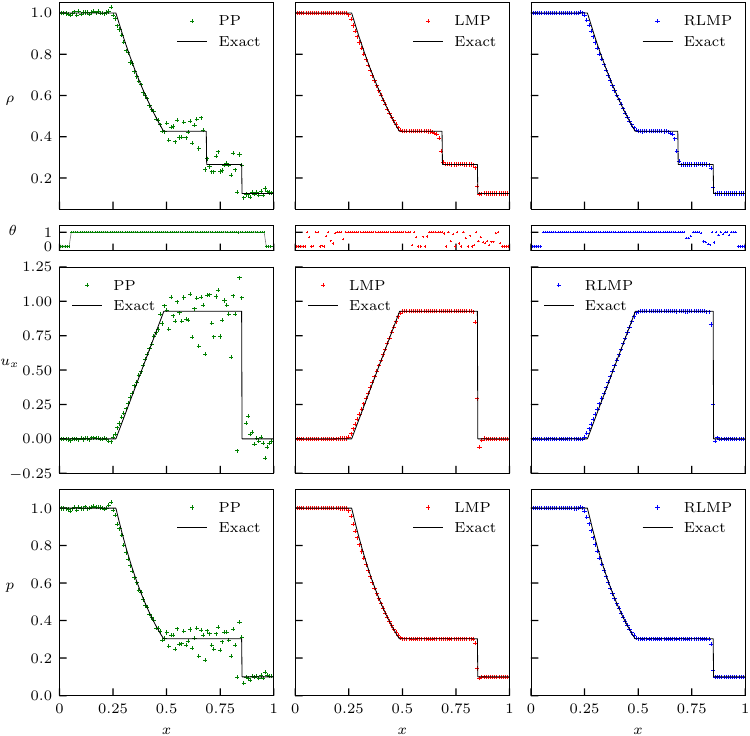}
        \caption{Sod shock tube at $t=0.2$ with $N=100$ points. Left: PP limiter, middle: LMP limiter, right: RLMP limiter.}
        \label{fig:Sod}
\end{figure}

The density, velocity, pressure and blending parameter profiles obtained at $t=0.2$ are displayed in Fig.~\ref{fig:Sod}. The PP limiter leads to very strong oscillations in a wide region between the rarefaction wave and the shock: the blending parameter is almost always set to $1$ in this case. Hence, the behavior of the second-order scheme, which is known to be very oscillating in presence of discontinuities~\cite{Graille2014, Dubois2014}, is recovered. For $N=100$, the density and the pressure remain positive even with the pure second-order scheme, this is however lost if one increases the resolution: a blending parameter is mandatory.
The LMP limiter allows to efficiently suppress the numerical oscillations, while maintaining remarkably sharp discontinuities. The shock and contact discontinuities are respectively captured on two and four points. Looking at the distribution of the blending parameter, we see that it is locally lower than $1$ at some points out of the discontinuities, which is how numerical oscillations can be suppressed in smooth regions. The RLMP limiter presents very few differences with the LMP one on this case: only very small oscillations are observed in smooth regions (\textit{e.g} close to the left boundary), which is due to larger admissible values for $\theta$.

\begin{figure*}[h!]
	\centering
	\includegraphics{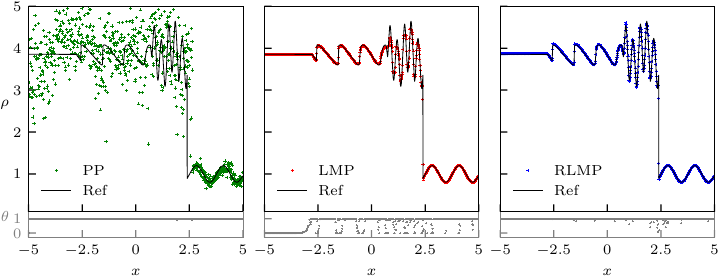}
	\caption{Density and blending parameter ($\theta$) profiles obtained for the Shu-Osher problem~\cite{Shu1988} at $t=1.8$ with $N=800$ points. Left: PP limiter, middle: LMP limiter, right: RLMP limiter. Reference: first-order scheme with $N=100000$ points.}
	\label{fig:ShuOsher_rho}
\end{figure*}

\subsection{Shu-Osher problem}

We consider the one-dimensional case of shock and sinusoidal wave interaction from Shu and Osher~\cite{Shu1988}. A domain of size $[-5,5]$ is initialized with the following conditions:
\begin{equation}
	(\rho, v_1, p)_0 =
	\begin{cases}
		(3.857143, 2.629369, 10.33333333333) \qquad \mathrm{if}\ x < -4, \\
		(1 + 0.2\, \sin(5x), 0, 1) \qquad \mathrm{else}.
	\end{cases} 
\end{equation}
This test case is simulated with the proposed blended LB schemes on a uniform mesh with $N=800$ points in the $x$-direction with zero-gradient conditions on the left and right boundaries. Density profiles obtained with the PP, LMP and RLMP limiters at time $t=1.8$ are displayed in Fig.~\ref{fig:ShuOsher_rho}. They are compared with a reference case obtained with the first-order scheme (equivalent to Lax-Friedrichs) with $N=100000$ points. As with the Sod Shock tube, the PP limiter leads to very large oscillations. Yet, density and pressure remain positive thanks to the introduction of the blending parameter, which can be locally lower than $1$ (see for example close to $x=2.5$ on Fig.~\ref{fig:ShuOsher_rho}). Note that in absence of limiter, the second-order scheme rapidly yields negative density and pressure on this case. With the LMP limiter, the numerical oscillations are remarkably reduced in all the domain, leading to a much more acceptable result. However, a major drawback of the LMP limiter is evidenced on this case: due to strict local constraints, the blending parameter switches to zero near all the extrema of the sinusoidal wave, thereby trimming the density profile. This leads to an over-dissipation especially observed downstream the shock wave. Relaxing the local constraints with the RLMP limiter leads to much more accurate results in line with the reference case. Velocity and pressure profiles obtained with the RLMP scheme, shown in Fig.~\ref{fig:ShuOsher_ux_p}, confirm the good agreement with the reference solution.

\begin{figure*}
	\centering
	\includegraphics{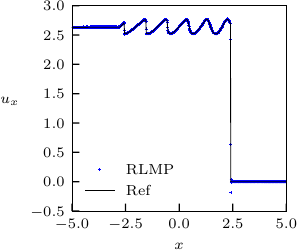} \hspace{1cm}
	\includegraphics{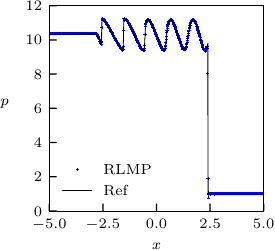}
	\caption{Velocity and pressure profiles of the Shu-Osher problem obtained with the relaxed local maximum principle (RLMP) at $t=1.8$ with $N=800$ points.}
	\label{fig:ShuOsher_ux_p}
\end{figure*}

\subsection{Near vacuum smooth isentropic wave}

We now consider a challenging smooth case involving very small values of pressure and density. 
A periodic domain of dimension $[-1,1]$ is considered with the initial conditions
$$
	(\rho, v_1, p)_0 = (1+\alpha \sin(\pi x), 0, \rho^\gamma), \quad \alpha=0.9999995 .
$$
When $\gamma=3$, the Riemann invariants follow two Burger's equations and the exact solution can be computed by the method of characteristics. Pressure and density are locally so close to zero that any scheme that does not ensure positivity preservation would fail this test (if $\alpha=1$, we get vacuum). In Fig.~\ref{fig:IsentropicSin_rho} are displayed the density profiles obtained at $t=0.1$ with the first-order scheme and the three assessed blended schemes with $N=50$ points. The PP limiter provides a very good agreement with the exact solution, especially when compared with the first-order scheme. The absence of numerical oscillations compared to the previous simulations is explained by the smoothness of this case, while numerical oscillations of the second-order LB scheme are generally triggered by discontinuities. With the LMP limiter, the solution is slightly deteriorated near the maximum of density, in a similar fashion that what was observed in~\cite{Vilar2024} with a discontinuous Galerkin scheme. With the proposed RLMP limiter, this defect is partly reduced, although not completely fixed.

\begin{figure*}
	\centering
	\includegraphics{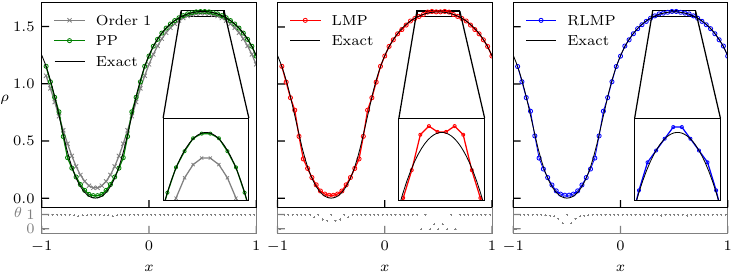}
	\caption{Density profiles of the near vacuum smooth isentropic wave at time $t=0.1$ with $N=50$ points.}
	\label{fig:IsentropicSin_rho}
\end{figure*}
%
%

\subsection{LeBlanc shock tube}

On the domain $[0,9]$, the initial conditions are given by:
\begin{equation*}
    \big (\rho, v_1, p\big )_0=\left\{\begin{array}{ll}
         \big (1, 0, 0.1(\gamma-1)\big ) &\mbox{if}~x \leq 3,\\
         \big (0.001, 0, 10^{-7}(\gamma-1)\big ) &\mbox{else}.
    \end{array}\right.
\end{equation*}
This is a very strong shock tube problem, where the correct location of the shock is difficult to obtain without enough points. This is a well known feature of this problem. The solution at time $t=6$ is displayed on Fig. \ref{fig:LeBlanc_rho} for the density and the blending parameter, and Fig. \ref{fig:LeBlanc_ux_p} for the velocity and the pressure with the RLMP limiter. The exact solution is also represented. In all cases, the positivity of the density and pressure are guaranted as expected. The PP blending is more oscillatory between the shock and the contact discontinuity: $\theta$ is almost everywhere equal to $1$. This problem is completely cured by the two other blending strategies.

\begin{figure*}
	\centering
	\includegraphics{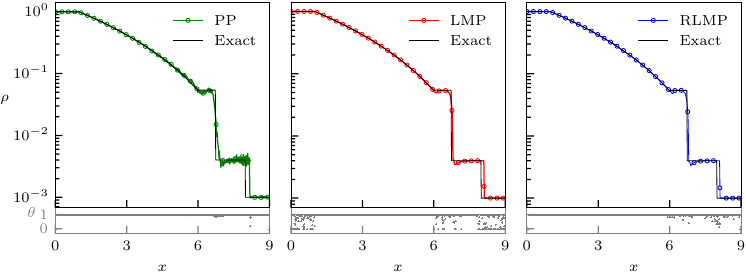}
	\caption{Density and blending parameter profiles of the LeBlanc problem at $t=6$ with $N=1000$ points. Left: PP limiter, middle: LMP limiter, right: RLMP limiter.}
	\label{fig:LeBlanc_rho}
\end{figure*}

\begin{figure*}
	\centering
	\includegraphics{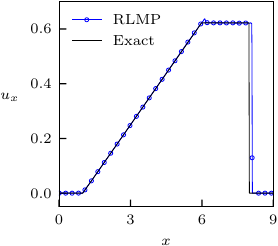}
	\hspace{1cm}
	\includegraphics{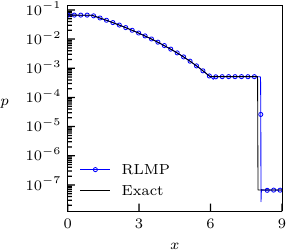}
	\caption{Velocity and pressure profiles of LeBlanc problem obtained with the RLMP limiter at $t=6$ with $N=1000$ points.}
	\label{fig:LeBlanc_ux_p}
\end{figure*}

\subsection{Sedov blast wave}

We consider the Sedov problem \cite{Sedov}, which features very low density with strong shocks. The domain is $[-1.1,1.1]^2$ and the initialization is done as follows: 
\begin{itemize}
     \item For all cells, we set  $\big(\rho, \bbv,  p\big)_0=\big ( 1,\mathbf{0},\frac{e_{\min}}{(\gamma-1)}\big )$ and $e_{\min}=10^{-12}$;
    \item Except the cell centered around $(0,0)$, we set $\big(\rho, \bbv, e\big)_0=\big(1,\mathbf{0}, \frac{e_{\max}}{\gamma-1}\big)$ with $e_{max}=0.979264/(\Delta x^2)$.
\end{itemize}
These conditions are set so that the strength of the shock is almost infinite, and the initial condition corresponds to a Dirac mass or energy at $\bbx=0$ in the limit of mesh refinement. The exact solution is computed following \cite{Kamm2000} and is evaluated at $t=1$ where it is known that the shock is a circle of radius $1$.

\begin{figure*}
	\centering
	\begin{subfigure}[b]{0.65\textwidth}
	\includegraphics[scale=0.90]{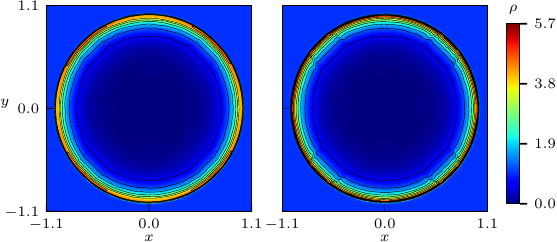}
	\end{subfigure}
	\begin{subfigure}[b]{0.33\textwidth}
		\includegraphics[scale=0.90]{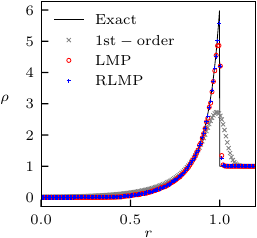}
	\end{subfigure}
	\caption{Sedov blast wave in two dimension at time $t=1$ on a $400\times 400$ cartesian mesh. Left: LMP limiter, middle: RLMP limiter, right: cutline along $y=x$.}
	\label{fig:sedov}
\end{figure*}

The solutions obtained with $400\times 400$ points are represented on figure \ref{fig:sedov}. Looking at the most right figure, one can see that the blending is very effective despite the stiffness of the problem: the solution is much better than with the first-order scheme, and very close to the exact one.

\subsection{Two-dimensional Riemann problem}

We consider a two-dimensional domain of size $[0,1]\times [0,1]$ with the initial condition of configuration 3 in~\cite{Lax1998}:
\begin{equation*}
	(\rho, v_1, v_2, p)_0 = 
	\begin{cases}
		(0.138, 1.206, 1.206, 0.029) & \mathrm{if}\ x \leq 0.8,\ y \leq 0.8, \\
		(0.5323, 1.206, 0, 0.3)  & \mathrm{if}\ x \leq 0.8,\ y > 0.8, \\
		(0.5323, 0, 1.206, 0.3) & \mathrm{if}\ x > 0.8,\ y \leq 0.8, \\
		(1.5, 0, 0, 1.5) & \mathrm{if}\ x > 0.8,\ y > 0.8, \\
	\end{cases}
\end{equation*}

Without any limiting, the simulation quickly crashes because of negative pressure and density. With the PP limiter, even though the simulation remains stable, strong oscillations makes the solution completely irrelevant. As shown in Fig.~\ref{fig:KT} displaying contours of density obtained at $t=0.8$, the LMP and RLMP limiters provide qualitatively accurate solution with sharp shocks and contact discontinuities, leading to Kelvin-Helmholtz instabilities even with $(400\times 400)$ points. Compared to LMP, the RLMP limiter reduces the numerical dissipation at the cost of acceptable oscillations.

\begin{figure*}	
\centering
\includegraphics[scale=0.97]{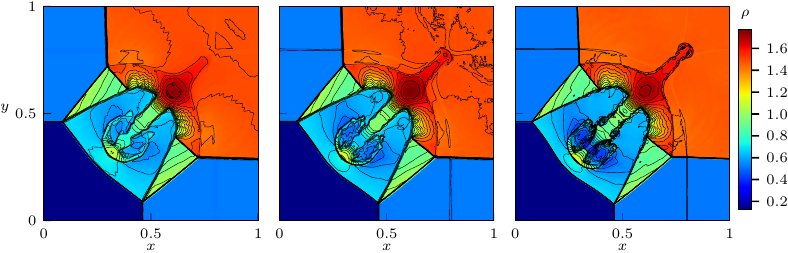}
\caption{Two dimensional Riemann problem (configuration 3 from~\cite{Lax1998}) at $t=0.8$. Left: LMP limiter with $(400 \times 400)$ points, middle: RLMP limiter with $(400 \times 400)$ points, right: RLMP limiter with $(800 \times 800)$ points. 30 equally-spaced contours lines from $0.127$ to $1.774$.}\label{fig:KT}
\end{figure*}

\subsection{Double Mach reflection}

We consider a two-dimensional domain of size $[0,3]\times [0,1]$ together with an oblique Mach 10 shock wave with the following pre- and post- shock states,
\begin{equation*}
	(\rho, v_1, v_2, p)_0 = 
	\begin{cases}
		(1.4, 0, 0, 1) & \mathrm{if}\ x \geq 1/6+ (y+20t)/\sqrt{3}, \\
		(8, 8.25\cos(\pi/6), -8.25\sin(\pi/6), 116.5) & \mathrm{else}.
	\end{cases}
\end{equation*}
These states are considered for the initial and boundary conditions (by setting the distribution to the corresponding Maxwellian) of the domain, except for the bottom boundary when $x \geq 1/6$, where a reflective (symmetry) wall is considered. As shown in fig.~\ref{fig:DMR} the LMP and RLMP are able to provide robust simulations with sharp discontinuities, giving birth to Kelvin-Helmoltz instabilities and controled numerical oscillations. 

\begin{figure*}
	\centering
	\includegraphics[scale=1]{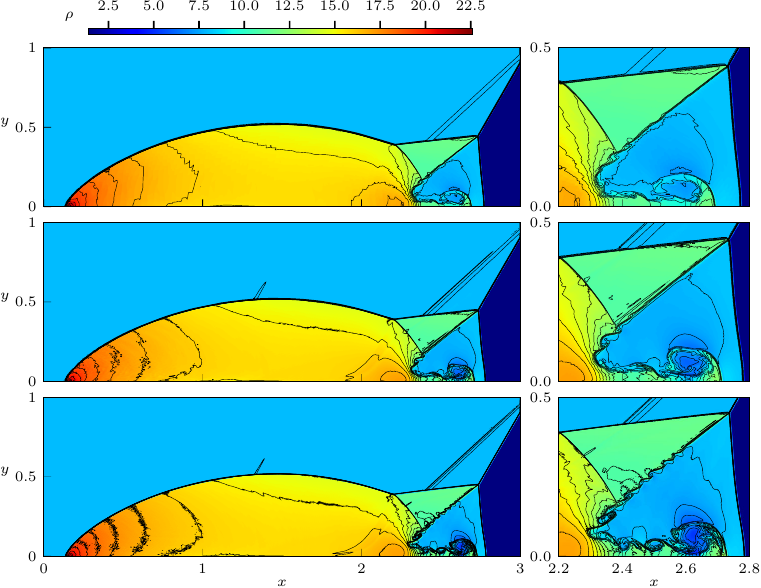}
	\caption{Density plots of the double Mach reflection at $t=0.2$. Top: LMP limiter with $(1500 \times 500)$ points, middle: RLMP limiter with $(1500 \times 500)$ points, bottom: RLMP limiter with $(3000 \times 1000)$ points. 30 equally-spaced contours from 1.386 to 22.589.}
	\label{fig:DMR}
\end{figure*}

\subsection{High Mach number astrophysical jets}

We conclude the numerical validations with two very challenging test cases in two dimensions: two astrophysical jets of Mach numbers 80 and 2000, following~\cite{Zhang2010}. In the first case, a computational domain of dimensions $[0, 2] \times [-0.5, 0.5]$ is considered with a fluid initially at conditions $(\rho, v_1, v_2, p)=(0.5, 0, 0, 0.4127)$. We consider zero-gradients conditions at the top, bottom and right boundaries, and impose $(\rho, v_1, v_2, p) = (5, 30, 0, 0.4127)$ if $|y|<0.05$ at the left boundary, initial conditions otherwise. The setup of the second case is similar, except that the considered domain is of dimensions $[0, 1] \times [-0.25, 0.25]$ and the inflow condition is $(\rho, v_1, v_2, p) = (5, 800, 0, 0.4127)$ if $|y| < 0.05$. Fig.~\ref{fig:AstroJets} displays the logarithm of density obtained at time $t=0.07$ for Mach=80 and $t=0.001$ for Mach=2000. In both cases, the LMP and RLMP limiters lead to stable numerical simulations with weak oscillations. Note that the LMP limiter introduces numerical dissipation in the jet core which affects the main flow structure. With the RLMP limiter, very encouraging results are obtained, especially when compared to similar simulations performed with high-order methods~\cite{Zhang2010, Duan2024-2D}.

\begin{figure*}
	\centering
	\includegraphics[scale=0.99]{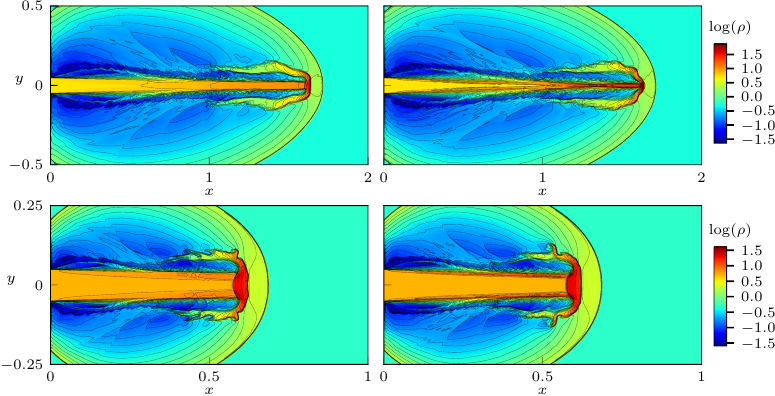}
	\caption{High Mach number astrophysical jets with $(800 \times 400)$ points. Top-left: Mach=80 with LMP limiter, top-right: Mach=80 with RLMP limiter, bottom-left: Mach=2000 with LBMP limiter, bottom-right: Mach=2000 with RLMP limiter.}
	\label{fig:AstroJets}
\end{figure*}

\section{Conclusion}
We have shown how to construct, from kinetic models inspired by~\cite{Jin, Natalini, Aregba} applied to the compressible Euler equations, a formally second-order scheme that preserves the positivity of the density and the internal energy. This is done thanks to a blending approach involving convex limiters carefully designed to (1) ensure conservation, (2) keep the simplicity of the collide and stream algorithm, and (3) preserve the solution in well-defined bounds. This strategy has been validated on challenging compressible test cases, providing guidance for selecting the RLMP limiter to achieve a balance between low numerical dissipation and spurious oscillations.

Though all the numerical tests we have performed use the perfect gas equation of state, the method can easily be generalized to more complex fluids. We have also shown how to use the collide-and-stream scheme in this context; this leads to a very efficient algorithm. Its performances will be reported in a future note.

There are many directions to explore now. One is how to extend this work to complex geometries. Another one is to adapt it to compressible viscous problems, following \cite{GauthierJCP1, GauthierJCP2}. More complex physics can also be considered, including for instance the introduction of source terms. The extension to 3D is in principle straightforward, and it will be interesting again to see the performance of the algorithm. We plan to address some or all of these issues in the near future.

\section*{Acknowledgements}

GW has been funded by SNFS grants \# 
200020\_204917 ``Structure preserving and fast methods for hyperbolic systems of conservation laws'' and FZEB-0-166980. The work of YL was supported by UZH Postdoc Grant, grant no. K-71120-05-01.

\appendix

\section{Multi-step finite difference interpretation of the second-order D2Q5 model}
\label{appendixA}

From \cite{Bellotti2022}, we show that the second-order scheme \eqref{eq:2nd-order_kinetic} with the D2Q5 model of Example~\ref{ex:D2Q5}, can be interpreted as the multi-step FD of \eqref{eq:multistep_FD} on the conserved variables. We start by rewriting the scheme as
\begin{equation}
    \begin{bmatrix}
        \bbu_1 \\ \bbu_2 \\ \bbu_3 \\ \bbu_4 \\ \bbu_5
    \end{bmatrix}_{i,j}^{n+1} =
    \mathbf{T}
    \begin{bmatrix}
        2 \bbM_1 - \bbu_1 \\ 2 \bbM_2 - \bbu_2 \\ 2 \bbM_3 - \bbu_3 \\ 2 \bbM_4 - \bbu_4 \\ 2 \bbM_5 - \bbu_5
    \end{bmatrix}_{i,j}^n, \qquad 
    \mathbf{T} = \mathrm{diag}(\bbx^-, \bbx^+, \bby^-, \bby^+, 1) \otimes \mathbf{I}_p,
    \label{eq:D2Q5_with_T}
\end{equation}
where $\otimes$ denotes the Kronecker product of two matrices and where $\bbx^-$, $\bbx^+$, $\bby^-$, $\bby^+$ are spatial operators defined such that
\begin{equation*}
    \bbx^- \bbu_{i,j}^n = \bbu_{i-1,j}^n, \quad \bbx^+ \bbu_{i,j}^n = \bbu_{i+1,j}^n, \quad \bby^- \bbu_{i,j}^n = \bbu_{i,j-1}^n, \quad \bby^+ \bbu_{i,j}^n = \bbu_{i,j+1}^n.
\end{equation*}
We then define the following moment matrix
\begin{equation}
	\mathbf{Q} = 
	\begin{bmatrix}
		1 & 1 & 1 & 1 & 1 \\
		1 & -1 & 0 & 0 & 0 \\
		0 & 0 & 1 & -1 & 0 \\
		1 & 1 & 0 & 0 & 0 \\
		0 & 0 & 1 & 1 & 0
	\end{bmatrix} \otimes \mathbf{I}_p , \quad \mathbf{Q}^{-1} =
	\begin{bmatrix}
		0 & 1/2 & 0 & 1/2 & 0 \\
		0 & -1/2 & 0 & 1/2 & 0 \\
		0 & 0 & 1/2 & 0 & 1/2 \\
		0 & 0 & -1/2 & 0 & 1/2 \\
		1 & 0 & 0 & -1 & -1
	\end{bmatrix} \otimes \mathbf{I}_p, \label{eq:moments_matrix}
\end{equation}
with which we define moments $
	\mathbf{m} := \mathbf{Q} [\bbu_1, \bbu_2, \bbu_3, \bbu_4, \bbu_5]^T$ and
\begin{equation*}
\begin{split}
	\mathbf{m}_{i,j}^{eq,n}& :=\mathbf{Q}[\bbM_1(\bbu_{i,j}^n), \bbM_2(\bbu_{i,j}^n), \bbM_3(\bbu_{i,j}^n), \bbM_4(\bbu_{i,j}^n), \bbM_5(\bbu_{i,j}^n)]^T \\
	& = [\bbu_{i,j}^n, \bbf(\bbu_{i,j}^n)/a^n, \bbg(\bbu_{i,j}^n)/a^n, (1-\alpha) \bbu_{i,j}^n/2, (1-\alpha) \bbu_{i,j}^n/2]^T.
	\end{split}
\end{equation*}
    Eq.~\eqref{eq:D2Q5_with_T} can be written in terms of moments as
\begin{equation}
	\mathbf{m}_{i,j}^{n+1} = \mathbf{A} \left[ \mathbf{m}_{i,j}^n - 2 \mathbf{m}_{i,j}^{eq,n} \right], \label{eq:monolithic_scheme_short}
\end{equation}
where 
\begin{align}
 \mathbf{A} = -\mathbf{Q} \mathbf{T} \mathbf{Q}^{-1} =
 \begin{bmatrix}
 	-1 & D_x & D_y & 1-A_x & 1-A_y \\
 	0 & -A_x & 0 & D_x & 0 \\
 	0 & 0 & -A_y & 0 & D_y \\
 	0 & D_x & 0 & -A_x & 0 \\
 	0 & 0 & D_y & 0 & -A_y
 \end{bmatrix} \otimes \mathbf{I}_p,
\end{align}
with $D_x = (\bbx^+-\bbx^-)/2$, $D_y=(\bby^+-\bby^-)/2$, $A_x=(\bbx^+ + \bbx^-)/2$ and $A_y=(\bby^+ + \bby^-)/2$. Following \cite{Bellotti2022}, we compute the characteristic polynomial of $\mathbf{A}$. It is given by:
\begin{align}
	P(z) = (z+1) (z^4 + 4A_a z^3 + (2+4A_d) z^2 + 4A_a z+1),
\end{align}
where $A_a$ and $A_d$  given by \eqref{eq:A_ad}. 
Using the Cayleigh-Hamilton theorem, 
$$
\left[\mathbf{A}^5 + (4A_a+1) (\mathbf{A}^4 + \mathbf{A}) + (4A_a+4A_d+2) (\mathbf{A}^3 + \mathbf{A}^2) + \mathbf{I} \right] \mathbf{m}_{i,j}^n = \mathbf{0}.
$$
Using \eqref{eq:monolithic_scheme_short}, the products $\mathbf{A}^k \mathbf{m}_{i,j}^n$ can be expressed as function of next time steps and equilibrium moments as 
$$
	\mathbf{A}^k \mathbf{m}_{i,j}^{n} = \mathbf{m}_{i,j}^{n+k} + 2 \sum_{l=1}^{k} \mathbf{A}^l \mathbf{m}_{i,j}^{eq,n+k-l}. 
$$
Plugging these expressions into $P(\mathbf{A})=0$ yields, after some calculations,
\begin{equation*}
\begin{split}
	& \mathbf{m}_{i,j}^{n+5} + (4A_a+1) (\mathbf{m}_{i,j}^{n+4} + \mathbf{m}_{i,j}^{n+1}) + (4A_a+4A_d+2)(\mathbf{m}_{i,j}^{n+3} + \mathbf{m}_{i,j}^{n+2}) \\
	& \quad  + \mathbf{m}_{i,j}^n - 4(2A_a+2A_d+1) \mathbf{m}_{i,j}^{eq,n+2} - 2(4A_a+1) \mathbf{m}_{i,j}^{eq,n+1} - 2 \mathbf{m}_{i,j}^{eq,n}  \\
	& \quad + 2 \mathbf{A} \mathbf{m}_{i,j}^{eq, n+4} + 2\left[ \mathbf{A}^2 + (4A_a+1)\mathbf{A} \right] \mathbf{m}_{i,j}^{eq,n+3}  \\
	& \quad - 2 \left[(4A_a+1) \mathbf{A}^{-1} + \mathbf{A}^{-2} \right] \mathbf{m}_{i,j}^{eq,n+2} - 2 \mathbf{A}^{-1}\mathbf{m}_{i,j}^{eq,n+1} = \mathbf{0}. 
	\end{split}
\end{equation*}
To obtain a multi-step finite-difference scheme over the conserved variables, we are interested in the first $p$ rows of this system. Noticing that
\begin{equation*}
\begin{split}
	\mathbf{r}_1 \mathbf{A} &= 
	\begin{bmatrix}
		-1 & D_x & D_y & 1-A_x & 1-A_y
	\end{bmatrix} \otimes \mathbf{I}_p, \\
	\mathbf{r}_1[\mathbf{A}^2+(4A_a+1)\mathbf{A}] &= 
	\begin{bmatrix}
		-4A_a & 3 \overline{D}_x & 3 \overline{D}_y & B & B
	\end{bmatrix} \otimes \mathbf{I}_p, \\
	\mathbf{r}_1 [(4A_a+1)\mathbf{A}^{-1} + \mathbf{A}^{-2}] &=
	\begin{bmatrix}
		-4A_a & -3 \overline{D}_x  & -3 \overline{D}_y & B & B
	\end{bmatrix} \otimes \mathbf{I}_p, \\
	\mathbf{r}_1 \mathbf{A}^{-1} &= 
	\begin{bmatrix}
		-1 & -D_x & -D_y & 1-A_x & 1-A_y 
	\end{bmatrix} \otimes \mathbf{I}_p,
	\end{split}
\end{equation*}
where $\mathbf{r}_1 = [1,0,0,0] \otimes \mathbf{I}_p$ and $B=4A_a-2A_d-A_x-1$, we obtain, after some computations, the scheme  \eqref{eq:multistep_FD}.

\bibliographystyle{siamplain}
\bibliography{refs}
\end{document}